\newtheorem{theorem}{Theorem}[section]
\newtheorem{lemma}[theorem]{Lemma}
\newtheorem{proposition}[theorem]{Proposition}
\newtheorem{definition}[theorem]{Definition}
\theoremstyle{remark}
\def\QSet{\mbox{\rm\kern.24em
\vrule width.03em height1.48ex depth-.051ex \kern-.26em Q}}
\def\Z{{\mathbb Z}}
\def\R{{\mathbb R}}
\def\G{{\mathcal G}}
\def\bas{\begin{align*}}
\def\eas{\end{align*}}
\def\bi{\begin{itemize}}
\def\ei{\end{itemize}}
\newenvironment{proof}{\noindent {\bf Proof} }{\endprf\par}
\def \endprf{\hfill  {\vrule height6pt width6pt depth0pt}\medskip}
\def\emph#1{{\it #1}}
\begin{document}
\title{Singular integrals along $N$ directions in $\R^2$}

\author{Ciprian Demeter}
\address{Department of Mathematics, Indiana University, 831 East 3rd St., Bloomington IN 47405}
\email{demeterc@@indiana.edu}

\keywords{Kakeya maximal function, singular integrals}
\thanks{The author is supported by a Sloan Research Fellowship and by NSF Grants DMS-0742740 and 0901208  }
\begin{abstract}
We prove optimal bounds in $L^2(\R^2)$ for the maximal operator obtained by taking a singular integral along $N$ arbitrary directions in the plane. We also give a new proof for the optimal $L^2$ bound for the single scale Kakeya maximal function. 
\end{abstract}

\maketitle

%%%%%%%%%%%%%%%%%%%%%%%%%%%%%%
\section{Introduction}
Our main result is the following:

\begin{theorem}
\label{T1}
Let $\Sigma_N$ a set of $N$ unit vectors in $\R^2$. Let $m$ be a  H\"ormander-Mikhlin multiplier, that is
$$|\partial^{\alpha}m(\xi)|\lesssim_{\alpha}\frac{1}{|\xi|^{\alpha}},\;\;\xi\not=0$$
for each $\alpha\ge 0$. Define
$$T_{N}^{*}f(x,y):=\sup_{v\in\Sigma_N}|\int \hat{f}(\xi,\eta)m(v_1\xi+v_2\eta)e^{i(x\xi+y\eta)}d\xi d\eta|.$$
Then $$\|T_{N}^{*}\|_{2\to 2}\lesssim \log N.$$
\end{theorem}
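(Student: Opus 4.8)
The plan is to pass from the maximal operator to a discretized model operator assembled from single Littlewood--Paley pieces, to control each piece by comparison with the single scale Kakeya maximal function, and then to reassemble the pieces by an almost orthogonality argument in the scale parameter. The factor $\log N$ will appear as the product of two $\sqrt{\log N}$'s: one from the Kakeya bound at a single scale, and one from the fact that there are $\sim\log N$ relevant scales whose contributions combine orthogonally rather than by the triangle inequality.

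\emph{Reductions.} Using the H\"ormander--Mikhlin hypothesis, write $m=m^{\mathrm{lo}}+\sum_{j\ge 0}m_j$ with $m^{\mathrm{lo}}$ smooth and compactly supported and each $m_j$ a Littlewood--Paley bump adapted to $\{|t|\sim 2^j\}$, with uniformly controlled rescaled derivatives. The multiplier $m^{\mathrm{lo}}(v_1\xi+v_2\eta)$ has kernel Schwartz at unit scale along the line $\R v$, so the supremum over $v$ of the corresponding operators is dominated pointwise by the Hardy--Littlewood maximal function and contributes $O(1)$ to $\|T_N^*\|_{2\to 2}$. Denote by $T_{v,j}$ the operator with symbol $m_j(v_1\xi+v_2\eta)$; it is convolution with a distribution supported on $\R v$ and concentrated there at scale $2^{-j}$. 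Linearizing the supremum one may write $T_N^*f(x)=|T_{v(x)}f(x)|$ for a measurable $v\colon\R^2\to\Sigma_N$, and after dualizing it suffices to prove
\[
\Big\|\sum_{v\in\Sigma_N}T_v^{*}(\one_{E_v}\,g)\Big\|_{2}\lesssim \log N\,\|g\|_2
\]
uniformly over all partitions $\R^2=\bigsqcup_{v\in\Sigma_N}E_v$ indexed by $\Sigma_N$.

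\emph{Single scale estimate via Kakeya.} Insert an isotropic Littlewood--Paley decomposition $\sum_k P_k=\mathrm{id}$. The symbol of $T_{v,j}P_k$ is supported where $|\xi|\sim 2^k$ and $|v_1\xi+v_2\eta|\sim 2^j$, which forces $j\le k+O(1)$ and lives on a pair of $2^k\times 2^j$ boxes tangent to the circle $|\xi|\sim 2^k$ at the two points orthogonal to $v$; hence $T_{v,j}P_k$ is, up to rapidly decaying tails, an average of $P_kf$ over rectangles of dimensions $2^{-j}\times 2^{-k}$ pointed in the direction $v$, of eccentricity $2^{k-j}$. Writing $k=j+\ell$ and $\Theta_\ell := \sum_v T_v^{(\ell)*}(\one_{E_v}\,\cdot)$ with $T_v^{(\ell)}:=\sum_j T_{v,j}P_{j+\ell}$, one reduces to bounding $\sum_\ell\Theta_\ell$. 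For a fixed $\ell$ the operator norm of $\Theta_\ell$ is controlled, via the pointwise comparison above, by the single scale Kakeya maximal operator $\mathcal K_N$ at eccentricity $2^{\ell}$ (only $\min(2^{\ell},N)$ directions are then distinguishable), together with Littlewood--Paley orthogonality in $j$ to sum the plates at fixed $\ell$; invoking the sharp bound $\|\mathcal K_N\|_{2\to2}\lesssim\sqrt{\log N}$ (the paper's second theorem) this gives $\|\Theta_\ell\|_{2\to2}\lesssim \sqrt{\log N}$ for the $O(\log N)$ values $-O(1)\le \ell\lesssim \log N$, and a bound with genuine decay in $\ell$ once $\ell\gg\log N$, where the frequency annulus $|\xi|\sim 2^{j+\ell}$ dwarfs the width $2^j$ of the plate.

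\emph{Assembling the scales --- the main obstacle.} It is crucial \emph{not} to sum the $\Theta_\ell$ by the triangle inequality, which would give $(\log N)^{3/2}$. Instead one runs a Cotlar--Stein / $TT^{*}$ argument in the scale parameter $\ell$: estimating $\Theta_{\ell}\Theta_{\ell'}^{*}$ and $\Theta_{\ell}^{*}\Theta_{\ell'}$ one must exhibit decay in $|\ell-\ell'|$, and for the off-diagonal terms in the implicit double sum over directions one must exhibit decay in the angular separation of $v$ and $w$ --- this is where the Littlewood--Paley localization is indispensable, since for the raw $\mathrm{sgn}$-type symbol the composition $T_wT_v^{*}$ carries no angular gain at all. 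Granting such a quasi-orthogonality, the $\sim\log N$ scales with $\|\Theta_\ell\|\lesssim\sqrt{\log N}$ contribute $(\log N)^{1/2}\cdot(\log N)^{1/2}=\log N$, and the tail $\ell\gg\log N$ contributes $O(\sqrt{\log N})$ by the decay. The hard part is precisely this last step: proving the two-parameter almost orthogonality (in scale and in direction simultaneously), keeping the constants uniform in the eccentricity so that the Kakeya bound is used at its sharp exponent, and doing so for general H\"ormander--Mikhlin $m$ rather than only the model case. All the earlier steps are, by contrast, standard once the reductions are set up.
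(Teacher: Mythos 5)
Your proposal does not establish the theorem: it is organized around a two-parameter almost-orthogonality (Cotlar--Stein in the scale parameter $\ell$ together with angular separation in $v$) that you yourself flag as unproven (``granting such a quasi-orthogonality\dots the hard part is precisely this last step''). That step is not a routine verification you can defer. After linearizing the supremum and dualizing, the operators $\Theta_\ell$ contain the cutoffs $\one_{E_v}$, which destroy the frequency localization your argument relies on; the composition $\Theta_\ell^*\Theta_{\ell'}=\sum_{v,w}\one_{E_v}T_v^{(\ell)}T_w^{(\ell')*}\one_{E_w}$ does not obviously exhibit either the decay in $|\ell-\ell'|$ or the angular decay in $v-w$ that Cotlar--Stein needs, precisely because of the spatial indicators sitting between the multiplier operators. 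The same issue undermines your single-scale bound $\|\Theta_\ell\|\lesssim\sqrt{\log N}$: the Littlewood--Paley orthogonality in $j$ you invoke to sum the plate pieces $T_{v,j}P_{j+\ell}$ acts on $\one_{E_v}g$, not on $g$, so it does not simply decouple the scales. Finally, replacing the directional singular integral by the single-scale Kakeya maximal operator ignores the oscillatory nature of the plate kernels (the plates are tangent to the circle, not centered at the origin); dominating by a positive average and then trying to recover $L^2$ bounds sharp in $N$ is not free.

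The paper's proof avoids all of these obstacles by a different mechanism. Rather than summing scales by orthogonality, it controls the scale interaction through the Chang--Wilson--Wolff exponential good-$\lambda$ inequality: one compares the discrete martingale square function $\Delta(T_vf)$ with a continuous square function $G(f)$ built from single-annulus pieces, chooses $\epsilon_N=\sqrt{c_1\log N}$ so that the exponential factor $e^{-c_1/\epsilon_N^2}$ beats the union bound over $N$ directions, and then needs only $\|G(f)\|_2\lesssim\sqrt{\log N}\|f\|_2$. That square-function bound in turn rests on the Lacey--Li theorem (equivalent to Carleson's theorem), which furnishes $L^2\to L^{2,\infty}$ and $L^p\to L^p$ bounds for the single-annulus directional operator that are uniform in $N$, combined with the interpolation trick of Lemma~\ref{L3}. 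Neither Chang--Wilson--Wolff nor the Lacey--Li/Carleson input appears in your outline, and without some substitute for them your reductions stop well short of a proof.
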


In Section \ref{S3} we also prove that the bound $\log N$ is optimal. It is well known (see for example Proposition 2, page 245 in \cite{ST}) that the distribution $\hat{m}$ coincides with a function $K$ outside the origin, which means that if $f$ is compactly supported and  $(x,y)$ is outside the support of $f$, then we also have the following kernel representation

$$T_{N}^{*}f(x,y)=\sup_{v\in\Sigma_N}|\int f((x,y)+tv)K(t)dt|$$

Thus, Theorem \ref{T1} is the singular integral analogue of the following bound for the Kakeya maximal function due to Nets Katz:

\begin{theorem}[\cite{K}]
\label{T5}
Let $\Sigma_N$ a set of $N$ unit vectors in $\R^2$. Define
$$M_{N}^{*}f(x,y):=
\sup_{v\in\Sigma_N}\sup_{\epsilon>0}|\frac{1}{2\epsilon}\int_{|t|<\epsilon} f((x,y)+tv)dt|.$$
Then $$\|M_{N}^{*}\|_{2\to 2}\lesssim \log N.$$
\end{theorem}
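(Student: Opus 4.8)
The plan is to prove Theorem \ref{T5}, the single-scale Kakeya maximal bound, by the same circle-of-ideas that underlie Theorem \ref{T1}: a combinatorial organization of the directions into scales, combined with orthogonality. (In fact the paper's stated goal is a new proof of this known result, so I want a self-contained argument rather than a black-box appeal to \cite{K}.) First I would reduce to the case where $f=\one_E$ is the indicator of a measurable set $E$, and where the supremum over $\epsilon$ is restricted to dyadic $\epsilon$, losing only a constant factor by Vitali covering / Lebesgue differentiation considerations; after rescaling I may take $\epsilon=1$ at each point but allow the vectors $v$ to live on a prescribed finite set. Then $M_N^*\one_E(x,y)$ at a point is, up to constants, the maximal (over $v\in\Sigma_N$) average of $\one_E$ over a $1\times$(arbitrary length) tube... more precisely over a segment; to make orthogonality available I fatten these segments to $\delta$-tubes with $\delta\sim 1/N$ after pigeonholing the directions so that they are $\delta$-separated, which costs nothing since $|\Sigma_N|=N$.

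The key step is a \emph{one-scale $TT^*$ / duality} estimate. Linearize the maximal operator: choose measurably $v(x,y)\in\Sigma_N$ and $r(x,y)>0$ achieving (half) the supremum, and estimate the resulting linear operator $T$ by testing $\|Tf\|_2$ against an $L^2$ function $g$. Writing things out, $\langle Tf,g\rangle$ becomes a sum over pairs of tubes, and one must bound the overlap: two tubes $\tau,\tau'$ with directions making angle $\theta$ and both of unit "width-normalized" length intersect in a set of measure $\lesssim \min(1,1/(N\theta))$ relative to the product of their measures — this is the elementary geometric input. Summing the bilinear form over the dyadic angular scales $\theta\sim 2^{-k}$, $k=0,1,\dots,\log N$, each scale contributes $O(1)$ after a Cauchy--Schwarz / Schur-test argument (the directions are $1/N$-separated so at each angular scale each tube meets $O(2^{j})$ others with controlled total overlap), and the number of scales is $\log N$. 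Taking square roots in the $TT^*$ inequality then gives $\|M_N^*\|_{2\to2}\lesssim (\log N)^{1/2}\cdot$... — here I have to be careful: a naive triangle inequality over scales gives $\log N$, and a more careful Rademacher--Menshov / square-function argument gives $\sqrt{\log N}$, but since Theorem \ref{T5} only asks for $\log N$, the crude summation over the $\log N$ dyadic angular scales suffices and I would take that route.

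Concretely, the steps in order: (1) standard reductions — indicator functions, dyadic radii, linearization of both suprema, and $\delta$-separation of directions with $\delta=1/N$, fattening segments to tubes; (2) decompose the linearized operator $T=\sum_{k=0}^{\log N} T_k$ where $T_k$ collects interactions between tubes whose directions differ by angle $\sim 2^{-k}/1$ (equivalently, group directions into $2^k$ arcs of length $2^{-k}$, and let $T_k$ be the piece where source and target directions lie in the same arc of generation $k$ but different arcs of generation $k+1$); (3) for each $k$ prove $\|T_k\|_{2\to2}\lesssim 1$ by a $TT^*$ argument: $\|T_k T_k^* \|_{1\to\infty}$-type kernel bound reduces to the geometric lemma that, fixing one tube, the total area of all other tubes in the same generation-$k$ arc weighted by their overlap is $O(1)$ — this is where the angular separation $2^{-k-1}$ between the distinguished sub-arcs and the width $1/N$ conspire so that only $O(1)$ worth of mass overlaps; (4) sum: $\|T\|_{2\to2}\le\sum_k\|T_k\|_{2\to2}\lesssim\log N$.

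The main obstacle I anticipate is step (3): making the geometric overlap estimate clean enough to feed into a Schur test uniformly in $k$. The subtlety is that within a single generation-$k$ arc there can be up to $N2^{-k}$ directions, so one is not merely comparing two tubes but controlling a whole bush of them; the saving has to come from the fact that tubes pointing in (nearly) the same direction are essentially parallel and hence "stack" with bounded overlap after the width-$1/N$ fattening, while tubes in the two distinguished half-arcs are transverse at angle $\gtrsim 2^{-k}$ so each pair overlaps in area $\lesssim 2^{-j}/(N\cdot2^{-k}) = 2^{k-j}/N$, and there are $\lesssim N2^{-k}$ of them — giving the desired $O(1)$. Packaging this as a genuine operator bound (rather than a pointwise $L^\infty$ estimate that would lose a log) is the delicate point, and I would handle it by the standard $T_kT_k^*$ computation where the kernel of $T_kT_k^*$ is exactly an integral of such pairwise overlaps, then Schur.
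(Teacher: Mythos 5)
First, a bookkeeping point that matters: the paper does \emph{not} prove Theorem~\ref{T5} --- it is stated and attributed to Katz \cite{K} as motivation, and the paper's own contributions are Theorem~\ref{T1} (the singular-integral analogue, via Chang--Wilson--Wolff and Lacey--Li) and Theorem~\ref{T7} (a new proof of the \emph{single-scale} bound, via a Fourier annular decomposition and a grid/cluster argument). You have also mislabelled the statement: Theorem~\ref{T5} is the \emph{multi-scale} Kakeya maximal function (the $\sup_{\epsilon>0}$ is part of the definition), whereas the single-scale statement you describe in your opening sentence is Theorem~\ref{T7}. So you are attempting a from-scratch proof of Katz's theorem, not a reconstruction of any argument actually in the paper, and that theorem is known to require more than an elementary overlap count.

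The substantive gaps are two. (i) The reduction ``after rescaling I may take $\epsilon=1$ at each point'' is not available: $\epsilon$ is a supremum variable inside the operator and cannot be normalized simultaneously at all points. Once you drop that illegitimate step, the remainder of your argument --- fattening segments to $1/N$-tubes of a fixed comparable length, decomposing interactions by the angle $\theta\sim 2^{-k}$, and running a Schur/$TT^*$ estimate using the overlap bound $\lesssim \delta^2/\theta$ --- is a single-scale argument: the overlap geometry and the appropriate fattening width for two tubes of wildly incomparable lengths are not the ones you write down, and the Schur sum does not close uniformly over $\epsilon$. To handle $\sup_{\epsilon>0}$ one needs an extra structural input: either a Littlewood--Paley decomposition of $f$ that reduces each annular piece to a single scale and then a square-function/good-$\lambda$ device to recombine (this is exactly the Chang--Wilson--Wolff route the paper takes for Theorem~\ref{T1}), or Katz's stopping-time/product-BMO machinery. (ii) The reduction to $1/N$-separated directions is asserted (``costs nothing'') but not justified; for arbitrary $\Sigma_N$ some directions may be much closer than $1/N$, and merging them requires an error estimate comparing directional averages for nearby directions against a controlled maximal operator (the paper proves exactly such a comparison as Lemma~\ref{L543} in the single-scale setting). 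Without it, the count ``at most $N2^{-k}$ directions per generation-$k$ arc'' that your Schur test relies on is unavailable. If your actual target had been Theorem~\ref{T7}, a Cordoba-type overlap argument along your lines is plausible (and would give $\log N$, short of the sharp $\sqrt{\log N}$), but even there the paper's route is different and cleaner for your purposes: it restricts $f$ to a single frequency annulus, groups directions into grid clusters, and replaces the directional average inside a cluster by a single radial Fourier projection $P_m$, sidestepping tube-overlap counting entirely.
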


In section \ref{S4} we give a new proof for the following fact proved in \cite{K1}
\begin{theorem}
\label{T7}
Let $\Sigma_N$ a set of $N$ unit vectors in $\R^2$. Define the single scale Kakeya operator
$$M_{N}^{0}f(x,y):=
\sup_{v\in\Sigma_N}|\int_{|t|<1} f((x,y)+tv)dt|.$$
Then $$\|M_{N}^{0}\|_{2\to 2}\lesssim \sqrt{\log N}.$$
\end{theorem}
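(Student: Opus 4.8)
The plan is to linearize, pass to the Fourier side, and run a Littlewood--Paley argument in which each frequency annulus contributes the single-scale C\'ordoba bound $\sqrt{\log N}$, while an almost-orthogonality estimate lets one sum the annuli in $\ell^2$ rather than in $\ell^1$; this $\ell^2$-summation is precisely what upgrades the bound $\log N$ (which one gets for free from Theorem \ref{T1} applied to a smooth compactly supported truncation) to $\sqrt{\log N}$. Concretely, by the usual linearization of a maximal operator I would fix a measurable $v\colon\R^2\to\Sigma_N$ and study the linear operator $Tf(p)=\int_{|t|<1}f(p+tv(p))\,dt=\sigma_{v(p)}*f\,(p)$, where $\sigma_w$ is arclength measure on $\{tw:|t|<1\}$, so that $\widehat{\sigma_w}(\xi)=2\sin(w\cdot\xi)/(w\cdot\xi)$. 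It suffices to prove $\|T\|_{2\to 2}\lesssim\sqrt{\log N}$.

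Let $P_k$, $k\ge 0$, be a Littlewood--Paley decomposition ($f$ may be taken with $\widehat f$ supported in $|\xi|\gtrsim 1$ after a harmless reduction). The two ingredients are: \emph{(i) a per-annulus bound} $\|TP_kf\|_2\lesssim\sqrt{\log N}\,\|P_kf\|_2$, uniformly in $k$, and \emph{(ii) an almost-orthogonality estimate} $\big\|Tf\big\|_2^2\lesssim\sum_k\|TP_kf\|_2^2$. Granting both, $\|Tf\|_2^2\lesssim\log N\sum_k\|P_kf\|_2^2\lesssim\log N\,\|f\|_2^2$, which is the claim. For (i), on the annulus $|\xi|\sim 2^k$ the symbol $\widehat{\sigma_w}(\xi)$ is $\approx 1$ on the $2^{-k}$-sector around $w^\perp$ and rapidly small off it, so it only "resolves'' directions up to accuracy $2^{-k}$; replacing $\Sigma_N$ by a $2^{-k}$-separated net of cardinality $\le\min(N,2^k)$ and invoking C\'ordoba's single-scale Kakeya estimate for $\delta$-separated directions (with $\delta\sim 2^{-k}$) gives the factor $\sqrt{\log\min(N,2^k)}\le\sqrt{\log N}$.

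The main obstacle is step (ii): since $v(\cdot)$ is an arbitrary measurable function, $T$ is not a Fourier multiplier and does not commute with $P_k$, so $TP_kf$ is not genuinely frequency-localized and orthogonality is not automatic. I would prove (ii) by a Cotlar--Stein argument, estimating $\|(TP_k)(TP_{k'})^*\|_{2\to2}\lesssim 2^{-\varepsilon|k-k'|}$ for $k\ne k'$. The kernel of $TP_k$ at the point $p$ is $\sigma_{v(p)}*\check\psi_k$, which is concentrated on a $2^k\times 1$ plank dual to a $2^{-k}\times 1$ tube in direction $v(p)$; composing the plank of scale $2^k$ with the one of scale $2^{k'}$ and integrating in $p$, the transversality of the two scales together with the decay of $\widehat{\sigma_w}$ off its slab forces the required gain. (Alternatively, the same decomposition and the per-scale bound can be phrased entirely in terms of sums of tube indicator functions, using the C\'ordoba-type intersection bound $|T\cap T'|\lesssim\delta^2/(\delta+\angle(T,T'))$ and the fact that the tubes produced by the linearization are pairwise disjoint within each fixed direction; this disjointness is exactly what keeps the estimate valid for an arbitrary, possibly highly clustered, set of $N$ directions.) A secondary technical point, routine but needing care, is the discretization/linearization that makes "one disjoint family of tubes per direction'' a faithful finite model of $M_N^0$.
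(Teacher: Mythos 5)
Your high-level goal is right: the $\sqrt{\log N}$ must come from summing annular contributions in $\ell^2$ rather than $\ell^1$. The gap is in step (ii), the almost-orthogonality claim $\|Tf\|_2^2\lesssim\sum_k\|TP_kf\|_2^2$. Write the linearized operator as $T=\sum_{w\in\Sigma_N}\mathbf 1_{E_w}T_w$, where $E_w=v^{-1}(w)$ and $T_w$ is the genuine Fourier multiplier with symbol $\widehat{\sigma_w}$. Then
$$(TP_k)(TP_{k'})^*=\sum_{w,w'}\mathbf 1_{E_w}\,T_w\,P_kP_{k'}\,T_{w'}^*\,\mathbf 1_{E_{w'}},$$
which vanishes identically for $|k-k'|\ge 2$ because $P_kP_{k'}=0$. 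So the Cotlar--Stein bound you propose to prove is trivially true, and proving it controls nothing. The composition that actually governs the cross-terms $\langle TP_kf,TP_{k'}f\rangle$ is the other one,
$$(TP_k)^*(TP_{k'})=\sum_{w}P_k\,T_w^*\,\mathbf 1_{E_w}\,T_w\,P_{k'},$$
and here the spatial cutoff $\mathbf 1_{E_w}$ sits between the two Fourier projections and destroys frequency localization. Your plank/transversality heuristic does not apply: in each term the two tubes point in the \emph{same} direction $w$, only their widths ($\sim 2^{-k}$ vs.\ $\sim 2^{-k'}$) differ, so they nest rather than meet transversally and there is no angular gain. Concretely, $\langle TP_kf,TP_{k'}f\rangle=\sum_w\int_{E_w}T_wP_kf\cdot\overline{T_wP_{k'}f}$, and each summand integrates to zero over all of $\R^2$ but not over $E_w$; Cauchy--Schwarz gives no decay in $|k-k'|$ and is not summable. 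This almost-orthogonality across scales is exactly the hard part of Theorem~\ref{T7}, and your proposal leaves it open. (Step (i) by itself is fine, though note that the ``per-annulus'' bound via a $2^{-k}$-net is already essentially the C\'ordoba $\delta$-separated estimate, so the whole weight of the theorem falls on (ii).)

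The paper's proof avoids orthogonality across Littlewood--Paley scales altogether. It decomposes each annulus $|\xi|\sim 2^n$ into $\sim 2^n$ sectors $\omega$ of angular width $\sim 2^{-n}$, records for each $\omega$ the arc $B(\omega)\subset S^1$ of directions that can interact with $\omega$, and then sorts sectors \emph{across all scales} by $\kappa=\log_2|B(\omega)\cap\Sigma_N|$, of which there are only $\lesssim\log N$ values. Within a fixed $\kappa$, the grid structure forces the $B(\omega)$'s to split into clusters that are pairwise disjoint (hence the corresponding pieces of $F$ are orthogonal), and within a cluster the direction may be frozen at the cluster center up to an error controlled by the strong maximal function (Lemma~\ref{L543}); the frozen operator is a bounded multiplier post-composed with $\sup_m|P_m|$, which is $L^2$-bounded by standard maximal function estimates. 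Each $\kappa$-class therefore contributes $O(1)\|\sum_{\omega\in\Omega^\kappa}F_\omega\|_2$, the pieces $\sum_{\omega\in\Omega^\kappa}F_\omega$ are mutually orthogonal in $\kappa$, and Cauchy--Schwarz over the $\lesssim\log N$ values of $\kappa$ produces $\sqrt{\log N}$. In short, the $\ell^2$-summation happens over the local direction count $\kappa$, not over frequency scale, which is what makes the argument robust for arbitrary, possibly heavily clustered, $\Sigma_N$.
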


The proofs in both \cite{K} and \cite{K1} rely on stopping time arguments similar to John Nirenberg's inequality in the context of product BMO.
Perhaps surprisingly, our arguments in this paper avoid product theory, and are essentially $L^2$ based.

 The proof of the Theorem \ref{T1} is very similar to the proof of Theorem 1.1. in \cite{GHS}, in particular it relies on the deep inequality of Chang, Wilson and Wolff. The second major ingredient we use is a result of Lacey and Li \cite{LL} for single annulus operators, which is essentially equivalent with Carleson's theorem
  \cite{Car}.

In the case $K(t)=p.v.\frac1t$, the bound
$$\|H_{N}^{*}\|_{2\to 2}\lesssim \log N.$$
for the operator
$$H_{N}^{*}f(x,y):=$$
$$\sup_{v\in\Sigma_N}|p.v.\int f((x,y)+tv)\frac{dt}{t}|=\sup_{v\in\Sigma_N}|\int \hat{f}(\xi,\eta)sgn(v_1\xi+v_2\eta)e^{i(x\xi+y\eta)}d\xi d\eta|$$
immediately follows from the Rademacher-Menshov theorem. We do not see a way to extend this type of approach to the general case.

We would like to thank Zubin Gautam, Nets Katz and Francesco Di Plinio for stimulating discussions on the subject.

\section{Discrete versus continuous square function}

To simplify notation, we will often replace $(x,y)$ with $x$.

Recall the conditional expectation with respect to the $\sigma$- algebra  consisting of dyadic squares of side length $2^{-j}$,
$$E_jf(x):=\sum_{Q:|Q|=2^{-2j}}\langle f,\frac{1_Q}{|Q|}\rangle 1_Q(x)$$
 and let $\Delta_j=E_{j+1}-E_{j}$ be the martingale difference. Denote by
 $$\Delta(f)(x)=(\sum_{k\ge 0}|\Delta_kf(x)|^2)^{1/2}$$
 the discrete square function. We will need the following fundamental inequality.

\begin{lemma}[The Chang-Wilson-Wolff inequality, \cite{CWW}]
\label{L5}
There exist constants $c_1,c_2$ such that for all $\lambda>0$ and $0<\epsilon<1$ one has
$$|\{x\in\R^2:|f(x)-E_0f(x)|>2\lambda,\Delta(f)(x)<\epsilon\lambda\}|\le c_2e^{-\frac{c_1}{\epsilon^2}}|\{x\in\R^2: \sup_{k\ge 0}|E_kf(x)|>\epsilon\lambda\}|.$$
\end{lemma}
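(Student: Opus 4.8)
The plan is to reduce the inequality, via a stopping-time decomposition, to an exponential (sub-Gaussian) estimate for dyadic martingales with pointwise-controlled square function; this is essentially the original argument of \cite{CWW}. Put
$$A:=\{x\in\R^2:\ |f(x)-E_0f(x)|>2\lambda,\ \Delta(f)(x)<\epsilon\lambda\},\qquad \Omega:=\{x\in\R^2:\ \sup_{k\ge 0}|E_kf(x)|>\epsilon\lambda\},$$
so the claim is $|A|\lesssim e^{-c_1/\epsilon^2}|\Omega|$. By the martingale convergence theorem, for a.e.\ $x\notin\Omega$ one has $|f(x)|\le\epsilon\lambda$ and $|E_0f(x)|\le\epsilon\lambda$, hence $|f(x)-E_0f(x)|\le2\epsilon\lambda<2\lambda$; thus $A\subseteq\Omega$ modulo a null set and $|A|\le|\Omega|$. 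Since $e^{-c_1/\epsilon^2}\gtrsim_{c_1}1$ for $\epsilon\ge\frac12$, we may assume $\epsilon<\frac12$.

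\emph{Localization.} Write $\Omega=\bigsqcup_i Q_i$, the partition into maximal dyadic cubes, and let $k_i$ be the generation of $Q_i$. If $\widehat Q$ is a dyadic cube with $Q_i\subsetneq\widehat Q$ then $|E_jf|\le\epsilon\lambda$ on $\widehat Q$ ($j=\mathrm{gen}(\widehat Q)$), for otherwise $\widehat Q\subseteq\{|E_jf|>\epsilon\lambda\}\subseteq\Omega$ would contradict maximality. Fix $i$. If $k_i\ge1$, the generation-$0$ and generation-$(k_i-1)$ ancestors of $Q_i$ are proper, so $|E_0f|\le\epsilon\lambda$ and $|E_{k_i-1}f|\le\epsilon\lambda$ on $Q_i$; letting $R_i\supsetneq Q_i$ be the dyadic parent and $g_i:=(f-E_{k_i-1}f)\mathbf{1}_{R_i}$, one checks $E_kg_i=0$ for $k\le k_i-1$ and $\Delta_kg_i=(\Delta_kf)\mathbf{1}_{R_i}$ for $k\ge k_i-1$, so $g_i$ is a mean-zero dyadic martingale with $\Delta(g_i)\le\Delta(f)$; moreover on $Q_i$, $f-E_0f=(E_{k_i-1}f-E_0f)+g_i$ with $|E_{k_i-1}f-E_0f|\le2\epsilon\lambda$, so on $A\cap Q_i$ we get $|g_i|>2(1-\epsilon)\lambda>\lambda$ and $\Delta(g_i)<\epsilon\lambda$. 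If $k_i=0$, take instead $g_i:=(f-E_0f)\mathbf{1}_{Q_i}$; then $g_i=f-E_0f$ on $Q_i$, so again $|g_i|>2\lambda>\lambda$ and $\Delta(g_i)<\epsilon\lambda$ on $A\cap Q_i$. Writing $P_i$ for $R_i$ (resp.\ $Q_i$) — a dyadic cube with $Q_i\subseteq P_i$ and $|P_i|\le4|Q_i|$ — we obtain $|A\cap Q_i|\le\big|\{x\in P_i:\ |g_i(x)|>\lambda,\ \Delta(g_i)(x)<\epsilon\lambda\}\big|$. Summing over $i$ and using $\sum_i|Q_i|=|\Omega|$, the theorem reduces to the following \emph{martingale estimate}: there is an absolute $c>0$ so that for every dyadic cube $Q_0$, every mean-zero dyadic martingale $g$ on $Q_0$, and all $t,s>0$,
$$\big|\{x\in Q_0:\ |g(x)|>t,\ \Delta(g)(x)<s\}\big|\ \lesssim\ e^{-c\,t^2/s^2}\,|Q_0|.$$

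\emph{The martingale estimate.} Its core is the classical exponential bound: there are absolute $C,\gamma>0$ such that if $g$ is a mean-zero dyadic martingale on $Q_0$ with $\Delta(g)\le\beta$ everywhere on $Q_0$, then $\frac1{|Q_0|}\int_{Q_0}e^{\gamma|g|^2/\beta^2}\le C$, and hence $|\{x\in Q_0:|g(x)|>t\}|\le Ce^{-\gamma t^2/\beta^2}|Q_0|$. With $\beta=1$ this is proved by the standard induction over scales: one controls $\frac1{|Q|}\int_Qe^{\gamma|g-c_Q|^2}$ ($c_Q=$ mean of $g$ on $Q$) in terms of the same quantity over the children of $Q$ and the remaining square-function budget $\|\sum_{j\ge\mathrm{gen}(Q)}|\Delta_jg|^2\|_{L^\infty(Q)}\le1$, using the vanishing conditional means of the martingale differences; at $Q=Q_0$ this gives the claim (see \cite{CWW}). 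To deduce the martingale estimate, freeze $g$ at the stopping time $\tau(x):=\min\{k:\ \sum_{j<k}|\Delta_jg(x)|^2>s^2\}$ (an $\F_k$-stopping time): on $\{\Delta(g)<s\}$ one has $\tau=\infty$, so the stopped martingale $g^\tau$ ($=\lim_kE_{k\wedge\tau}g$) agrees with $g$ there, giving $\{|g|>t,\ \Delta(g)<s\}\subseteq\{|g^\tau|>t\}$; it then remains to bound the latter by the exponential bound applied to $g^\tau$.

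\emph{The main obstacle} is precisely this last bound: the stopped martingale $g^\tau$ has square function $\le s$ \emph{except for the single ``crossing'' difference $\Delta_{\tau(x)-1}g(x)$} at the stopping scale, which need not be small on $Q_0\setminus\{\Delta(g)<s\}$, so the exponential bound cannot be invoked for $g^\tau$ on all of $Q_0$ verbatim. This is the familiar technical wrinkle in the Chang--Wilson--Wolff argument; it is handled in \cite{CWW}, e.g.\ by establishing a form of the exponential estimate robust to one arbitrary last jump, or by refining $\tau$ so that it also stops once the conditional size of the next martingale difference reaches $s$. I expect this bookkeeping around the stopping time, together with carrying out the exponential-square induction, to be essentially the whole content of the proof; the reduction to the martingale estimate is soft.
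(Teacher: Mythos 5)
The paper does not prove this lemma: it is quoted verbatim as a black-box ingredient with the citation \cite{CWW}, so there is no in-paper argument to compare your proposal against. That said, your sketch is an accurate outline of the standard Chang--Wilson--Wolff argument. The reduction to $\epsilon<\tfrac12$, the Calder\'on--Zygmund decomposition of $\Omega$ into maximal dyadic squares $Q_i$, the bound $|E_0f|,|E_{k_i-1}f|\le\epsilon\lambda$ on $Q_i$ from maximality, the passage to the renormalized martingale $g_i=(f-E_{k_i-1}f)\mathbf 1_{R_i}$ with $\Delta(g_i)\le\Delta(f)$, the transfer of the level-set condition via $|E_{k_i-1}f-E_0f|\le2\epsilon\lambda$, and the closing estimate $\sum_i|R_i|=4|\Omega|$ are all correct, and they reduce the lemma to the single-cube sub-Gaussian bound for a mean-zero dyadic martingale. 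You also correctly identify the genuine technical obstacle there: the stopped martingale $g^\tau$, with $\tau=\min\{k:\sum_{j<k}|\Delta_jg|^2>s^2\}$, has square function $\le s$ only up to one uncontrolled ``crossing'' increment $\Delta_{\tau-1}g$, so the exponential-square estimate cannot be applied to $g^\tau$ as is. You flag this and defer to \cite{CWW}; this is where the real content of the lemma lies (a robust route is a good-$\lambda$ inequality for $g^*$ against the square function that tolerates a single large jump, or a refined stopping rule that also stops once the conditional size of the next increment reaches $s$). In summary: an honest, structurally correct sketch, but with an acknowledged gap at the key martingale estimate; since the paper itself cites the result rather than proving it, this outline is fit for purpose but is not a self-contained proof.
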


Define for each $v=(v_1,v_2)\in\Sigma_N$
$$T_vf(x,y):=\int \hat{f}(\xi,\eta)m(v_1\xi+v_2\eta)e^{i(x\xi+y\eta)}d\xi d\eta.$$

Let $\phi\in C^{\infty}(\R^2)$ be supported in $1/2<|\xi|<2$ so that
$$\sum_{k\in\Z}\phi(2^{-k}\xi)=0,\;\;\xi\not=0.$$
Choose $\beta$ a Schwartz function such that $\hat{\beta}$ is supported in $|x|\le 1/4$ and $\beta$ is nonzero in $1/4\le |\xi|\le 4$ and $\beta(0)=0$. Finally, choose a Schwartz function $\psi$ such that $\psi(\xi)\beta^2(\xi)=1$ for $\xi$ in the support of $\phi.$

For each linear bounded operator $T:L^2\to L^2$ denote by $T_kf=T(S_kf):L^2\to L^2$, where $\widehat{S_kf}(\xi)=\hat{f}\phi(2^{-k}\xi)$.

We need the following variant of Lemma 3.1 from \cite{GHS}.
\begin{lemma}
Let $T$ be a linear bounded multiplier operator $T:L^2\to L^2$, that is
$$\widehat{Tf}=m\hat{f},$$
for some $m\in L^{\infty}(\R^2)$.
 Then
there exists $c_3>0$ independent of $N$, $T$ and $x$ such that for almost every $x\in\R^{2}$ and each $f\in L^2(\R^2)$
$$\Delta(Tf)(x)\le c_3 (\sum_{k\in\Z}|M_2(M(T_kf)(x))|^2)^{1/2},$$ where
$$M_2g(x)=(M(g^2)(x))^{1/2},$$
$Mg$ being the standard Hardy-Littlewood maximal function.
\end{lemma}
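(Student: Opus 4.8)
The plan is to estimate each martingale difference $\Delta_k(Tf)(x)$ pointwise by the product structure of the dyadic grid in $\R^2$, exploiting the frequency localization of $S_k$. The key mechanism: applying $\Delta_k$ (a martingale difference at dyadic scale $2^{-k}$ in two dimensions) to a function whose frequency support lives in the annulus $|\xi|\sim 2^k$ should essentially reproduce the function up to a rapidly decaying tail, and this "essentially reproduces" can be quantified by the Hardy–Littlewood maximal function. The two-dimensional product structure of dyadic squares is why the iterated maximal operator $M_2\circ M$ appears rather than a single maximal function.

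**First I would** insert the frequency decomposition. Since $\sum_{k}\phi(2^{-k}\xi)=0$ for $\xi\neq0$... wait, the paper writes $\sum\phi(2^{-k}\xi)=0$, but one surely wants $\sum\phi(2^{-k}\xi)=1$ off the origin for a Calderón reproducing formula — I would use $Tf=\sum_k T_k f$ (with $T_kf = T(S_kf)$) in the appropriate sense, so that $\Delta_j(Tf) = \sum_k \Delta_j(T_kf)$. Then I would use the Schwartz functions $\beta,\psi$ to write $S_k = \beta_k^2\psi_k$-type splittings (here $\beta_k,\psi_k$ are the $L^1$-normalized dilates), so that $T_kf = \tilde\beta_k * \big(\psi_k*(\tilde\beta_k * T_kf)\big)$ or a similar factorization allowing one convolution factor to be absorbed by $\Delta_j$ and the others by maximal functions. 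The role of $\hat\beta$ being supported in $|x|\le 1/4$ is to make $\beta_k*g$ supported near the support of $g$ at scale $2^{-k}$; the role of $\beta(0)=0$ (equivalently $\int\beta=0$) is a cancellation condition giving the off-diagonal decay in $|j-k|$.

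**The core estimate** is: for a function $g$ with frequency support in $|\xi|\sim 2^k$, one has the pointwise bound $|\Delta_j g(x)| \lesssim 2^{-|j-k|}\, M_2(M g)(x)$, or at least something summable in $|j-k|$ after one extra derivative's worth of decay. One direction ($j\ge k$, i.e. $\Delta_j$ at a finer scale than the oscillation of $g$): $\Delta_j g$ is a difference of averages of $g$ over dyadic squares where $g$ is roughly constant, and the cancellation $\int\beta = 0$ on the reproducing kernel yields smallness. The other direction ($j<k$): the oscillation of $g$ at scale $2^{-k}$ is finer than the averaging scale $2^{-j}$, so $E_j$ (hence $\Delta_j$) kills most of $g$; quantify by writing $E_j g$ as an average against $1_Q/|Q|$ and using the cancellation of $g$'s high-frequency localization against the (nonsmooth) indicator — this is where one pays with a maximal function rather than getting clean decay, and where the iteration $M_2\circ M$ (maximal function of the square, then maximal function again) enters to handle the fact that $1_Q$ is not smooth but the convolution kernels of $S_k$, $\beta_k$ are. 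Once the pointwise bound with summable $|j-k|$ decay is in hand, $\Delta(Tf)(x)^2 = \sum_j |\Delta_j(Tf)(x)|^2 = \sum_j |\sum_k \Delta_j(T_kf)(x)|^2$, and Cauchy–Schwarz against the summable weight $2^{-|j-k|}$ plus summation in $j$ collapses this to $\sum_k M_2(M(T_kf)(x))^2$, giving the claimed bound with some absolute constant $c_3$.

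**The main obstacle** I anticipate is the honest proof of the pointwise inequality $|\Delta_j g(x)|\lesssim 2^{-\delta|j-k|} M_2(Mg)(x)$ for frequency-localized $g$ — specifically, controlling $\Delta_j$ applied to a band-limited function by a maximal function uniformly, handling the non-smooth indicator $1_Q$ in $E_j$ against the smooth Littlewood–Paley pieces, and getting genuine exponential (or at least sufficiently fast polynomial) decay in $|j-k|$ from the moment condition $\int\beta=0$ and the Schwartz decay of all the kernels involved. This is essentially the content of Lemma 3.1 of \cite{GHS}, so I would follow that argument closely, adapting the single maximal function there to the iterated $M_2\circ M$ needed because we are in $\R^2$ with the product dyadic structure (a square of side $2^{-j}$ is a product of two intervals, and the kernel estimates decouple in the two variables, forcing the composition of two one-parameter maximal operators). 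The verification that $M_2(Mg) = (M((Mg)^2))^{1/2}$ is the correct object — rather than, say, the strong maximal function — is the delicate bookkeeping point.
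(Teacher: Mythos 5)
Your overall strategy is the same as the paper's: write $Tf=\sum_n T_{k+n}f$ (and yes, the displayed $\sum_k\phi(2^{-k}\xi)=0$ is a typo for $=1$), use the factorization $\psi\beta^2=1$ on $\supp\phi$ to express $T_{k+n}f = B_{k+n}\bigl(B_{k+n}L_{k+n}T_{k+n}f\bigr)$, obtain the pointwise bound $|\Delta_k B_{k+n}g(x)|\lesssim 2^{-|n|/2}M_2g(x)$ from Sublemma~4.2 of \cite{GHS} together with $|B_{k+n}L_{k+n}g(x)|\lesssim Mg(x)$, and then sum via Cauchy--Schwarz. That is precisely the paper's argument, and your identification of the roles of $\supp\hat\beta\subset\{|x|\le 1/4\}$ (compact support of the kernel at scale $2^{-k}$, which is what lets $\Delta_k B_{k+n}$ see cancellation of the martingale difference) and of $\beta(0)=0$ (moment condition giving the $2^{-|n|/2}$ gain) is correct.

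One conceptual point in your write-up is wrong, and it is worth flagging because you present it as the main novelty you would have to supply. You attribute the appearance of the iterated operator $M_2\circ M$ to the two-dimensional product structure of dyadic squares, claiming the two maximal functions are one-parameter operators that ``decouple in the two variables.'' That is not what is happening: both $M$'s in $M_2(Mg)=(M((Mg)^2))^{1/2}$ are the \emph{full} two-dimensional Hardy--Littlewood maximal operator, and the same composition already appears in Lemma~3.1 of \cite{GHS} in $\R^n$, independent of dimension. The reason you get a composition of two maximal operators is the factorization itself: one application of $M_2$ comes from the $\Delta_k B_{k+n}$ estimate (this is the Chang--Wilson--Wolff-type pointwise bound and needs the $L^2$-average $M_2$ rather than $M$ because the kernel of $\Delta_k$ involves differences of non-smooth indicators), and the inner $M$ comes from the extra convolution $B_{k+n}L_{k+n}$, which is a Schwartz approximate identity at scale $2^{-(k+n)}$ and is therefore dominated pointwise by $M$. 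In other words, you pay one maximal function per convolution factor in the factorization, not one per spatial variable. If you try to ``decouple the two variables'' you will end up with a strong maximal operator, which is not what the lemma asserts and is not what the later Chang--Wilson--Wolff step requires. Apart from this misdiagnosis (which you would likely correct upon reading \cite{GHS} closely, as you say you intend to), the proposal is sound.
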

\begin{proof}
Define
$$\widehat{B_kf}(\xi)=\beta(2^{-k}\xi)\hat{f}(\xi)$$
$$\widehat{L_kf}(\xi)=\psi(2^{-k}\xi)\hat{f}(\xi).$$
Note that
$$\Delta_kTf=\sum_{n\in\Z}(\Delta_kB_{k+n})(B_{k+n}L_{k+n})T_{k+n}f.$$
Sublemma 4.2 from \cite{GHS} implies that (uniformly) for each $x$
$$|\Delta_kB_{k+n}g(x)|\lesssim 2^{-|n|/2}M_2g(x).$$
Since also
$$|B_{k+n}L_{k+n}g(x)|\lesssim Mg(x),$$
uniformly over $k,n$, the result follows from the triangle inequality.
\end{proof}

\begin{lemma}
\label{L2}
Let $T$ be a linear bounded multiplier operator $T:L^2(\R^2)\to L^2(\R^2)$ associated with multiplier $m$. Assume in addition that $m$ is zero on the ball with radius $2^{2N}$.
Then
there exists $c_4>0$ independent of $N$, $T$ and $x$  such that for almost every $x\in\R^{2}$ and each $f\in L^2(\R^2)$
$$|E_0(Tf)(x)|\le c_42^{-N} (\sum_{k\in\Z}|M_2(M(T_kf)(x))|^2)^{1/2}$$
\end{lemma}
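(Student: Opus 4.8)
The plan is to exploit the same structural decomposition as in the previous lemma, but now to extract a quantitative gain from the hypothesis that $m$ vanishes on the ball of radius $2^{2N}$. Since $\widehat{S_kf}(\xi) = \hat f(\xi)\phi(2^{-k}\xi)$ is supported where $2^{k-1}<|\xi|<2^{k+1}$, the operator $T_k = T\circ S_k$ is identically zero unless $2^{k+1} > 2^{2N}$, i.e. unless $k \ge 2N-1$. So in fact only the scales $k \gtrsim 2N$ contribute, and this is where the $2^{-N}$ factor must come from.

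First I would write $E_0(Tf) = \sum_{k} E_0(T_k f)$ (the sum effectively running over $k\ge 2N-1$), where as before $T_k f = (B_k L_k) B_k\, T_k f$ up to the Littlewood--Paley bookkeeping, and recall that $E_0 = \sum_{k\ge 0}\Delta_k + E_0$ applied at the coarsest scale. The key point is a pointwise estimate for $E_0 B_k g(x)$ when $k$ is large: since $\hat\beta$ is supported in $|x|\le 1/4$, the kernel of $B_k$ is an $L^1$-normalized bump essentially at scale $2^{-k}$, while $E_0$ averages over the unit square; a Schwartz-tail / cancellation computation (the $\beta(0)=0$ condition provides a vanishing moment, exactly as Sublemma 4.2 in \cite{GHS} is used to get the $2^{-|n|/2}$ gain between $\Delta_k$ and $B_{k+n}$) should give
$$|E_0 B_{k+n} g(x)| \lesssim 2^{-(k+n)/2}\, M_2 g(x),$$
or at worst a bound that is summably small once $k\gtrsim 2N$. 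Combining this with $|B_{k+n}L_{k+n}g(x)|\lesssim Mg(x)$ exactly as in the preceding lemma, and summing the resulting geometric series in $k\ge 2N$ (and in $n$), produces the factor $\sum_{k\ge 2N}2^{-k/2}\lesssim 2^{-N}$, with the remaining sum over $k$ of $|M_2(M(T_kf)(x))|^2$ controlled by the full square function $(\sum_k |M_2(M(T_kf)(x))|^2)^{1/2}$ after an application of Cauchy--Schwarz against the geometric weights.

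The step I expect to be the main obstacle is making the pointwise decay estimate $|E_0 B_{k+n}g(x)|\lesssim 2^{-c(k+n)}M_2 g(x)$ clean and uniform: one must be careful that $E_0$ is a non-smooth (dyadic) average, so the gain cannot come from integration by parts against $E_0$ directly, but rather from the smoothness and the vanishing moment of $\beta$ together with the fact that $E_0 1_Q$ is constant on unit dyadic squares. The honest way is to note $E_0 B_{k} g = E_0(B_k g)$ and bound $\|B_k g\|$ locally on each unit square by the decay of the kernel of $B_k$ convolved against $g$, which is where $M_2$ enters. Once that lemma-internal estimate is in hand, the rest is the same triangle-inequality-plus-geometric-series argument as in the lemma immediately preceding, and the constant $c_4$ is manifestly independent of $N$, $T$, and $x$.
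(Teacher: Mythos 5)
Your proposal is correct and follows essentially the same route as the paper. The paper's proof simply replaces $\Delta_k B_{k+n}$ in the previous lemma by $E_0 B_n$, invokes the bound $|E_0 B_n g(x)|\lesssim 2^{-|n|/2}M_2 g(x)$ for $n\ge 0$ (also part of Sublemma 4.2 in [GHS]), and observes that since $T_nf=0$ for $n\lesssim 2N$ (the Fourier support of $S_n f$ lies where $m$ vanishes), the geometric sum over $n\gtrsim 2N$ together with Cauchy--Schwarz produces the $2^{-N}$ prefactor exactly as you describe. One small remark: the aside that ``$E_0=\sum_{k\ge 0}\Delta_k+E_0$ applied at the coarsest scale'' is unnecessary and, as written, circular; the decomposition actually used is directly $E_0 Tf=\sum_n (E_0 B_n)(B_n L_n)T_n f$, which you otherwise have.
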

\begin{proof}
The argument follows as in the previous lemma, by using the fact that (part of Sublemma 4.2 in \cite{GHS})
$$|E_0B_{n}g(x)|\lesssim 2^{-|n|/2}M_2g(x),$$
for each $n\ge 0$.
\end{proof}

\section{Proof of the Theorem \ref{T1}}
\label{S3}

We have proved in the previous section that
$$\sup_{v\in\Sigma_N}|\Delta (T_v)f(x)|\le G(f)(x):=c_3(\sum_{k\in\Z}|M_2(M(\sup_{v\in\Sigma_N}|T_v(S_kf)|)(x))|^2)^{1/2}.$$

The following Lemma is a variant of similar lemmas from \cite{CHS} and \cite{K}.
\begin{lemma}
\label{L3}
Let $T$ be a sublinear operator on some measure space $(X,m)$. Assume
$$\|T\|_{L^1\to L^{1,\infty}}\lesssim N$$
$$\|T\|_{L^2\to L^{2,\infty}}\lesssim 1$$
$$\|T\|_{L^p\to L^{p}}\lesssim N,$$
for some $2<p<\infty$.
Then
$$\|T\|_{L^2\to L^{2}}\lesssim \sqrt{\log N}$$
\end{lemma}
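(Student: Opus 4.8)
The plan is to prove Lemma \ref{L3} by a Calder\'on--Zygmund-type decomposition at a height calibrated to $\lambda$ and the three hypotheses, then optimize over the split. Fix $\lambda>0$ and $f\in L^2$. Writing $f=g+b$ where $g$ is the truncation of $f$ at a level $\mu\lambda$ (for a parameter $\mu\ge 1$ to be chosen), so that $\|g\|_\infty\lesssim \mu\lambda$ and $\|g\|_2\le\|f\|_2$, while $b=f-g$ is supported where $|f|>\mu\lambda$ and satisfies $\|b\|_1\le \mu^{-1}\lambda^{-1}\|f\|_2^2$ crudely, or better $\|b\|_1=\int_{|f|>\mu\lambda}|f|$. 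We then estimate
\[
m(\{Tf>2\lambda\})\le m(\{Tg>\lambda\})+m(\{Tb>\lambda\}).
\]

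\medskip

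For the good part $g$, I would interpolate the $L^2\to L^{2,\infty}$ bound with the $L^p\to L^p$ bound: since $T$ is sublinear and $g\in L^2\cap L^\infty\subset L^2\cap L^p$, we get $\|Tg\|_{p}\lesssim N\|g\|_p\lesssim N (\mu\lambda)^{1-2/p}\|g\|_2^{2/p}$, hence
\[
m(\{Tg>\lambda\})\lesssim \lambda^{-p}\|Tg\|_p^p\lesssim N^p (\mu\lambda)^{p-2}\lambda^{-p}\|f\|_2^2=N^p\mu^{p-2}\lambda^{-2}\|f\|_2^2.
\]
Alternatively, and more efficiently for the borderline counting, use $\|Tg\|_{2,\infty}\lesssim\|g\|_2\le\|f\|_2$ directly to get $m(\{Tg>\lambda\})\lesssim\lambda^{-2}\|f\|_2^2$ with no $N$ at all; the role of the $L^p$ bound is really to control the bad part. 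For the bad part, the $L^1\to L^{1,\infty}$ bound gives $m(\{Tb>\lambda\})\lesssim N\lambda^{-1}\|b\|_1=N\lambda^{-1}\int_{|f|>\mu\lambda}|f|$. Now integrate the weak-type information against $\lambda\,d\lambda$ to recover the strong $L^2$ norm: $\|Tf\|_2^2\sim\int_0^\infty \lambda\, m(\{Tf>\lambda\})\,d\lambda$, split the $\lambda$-integral at a threshold, and on the low range use the $N$-free bound from $g$ while on the complementary piece use the bad-part bound, choosing $\mu\sim\log N$ (equivalently splitting at $\lambda\sim$ a fixed multiple of the relevant scale) so that the geometric-type gain from integrating $\int_{|f|>\mu\lambda}|f|\,d\lambda$ over the appropriate range produces exactly one factor of $\log N$ inside the square, i.e. $\|Tf\|_2^2\lesssim(\log N)\|f\|_2^2$.

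\medskip

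Concretely, I expect the cleanest route is: for each $\lambda$ choose the CZ height to be $\mu\lambda$ with a single fixed $\mu$ (not depending on $\lambda$), bound $m(\{Tf>2\lambda\})\lesssim \lambda^{-2}\|f\|_2^2$ from the $g$-term (via $L^2$ weak-type) plus $N\lambda^{-1}\int_{|f|>\mu\lambda}|f|$ from the $b$-term, then write
\[
\|Tf\|_2^2\lesssim\int_0^\infty\lambda\min\Bigl(\lambda^{-2}\|f\|_2^2,\ N\lambda^{-1}\!\!\int_{|f|>\mu\lambda}\!\!|f|\Bigr)d\lambda.
\]
Splitting the min at the crossover and using $\int_0^\infty\int_{|f|>\mu\lambda}|f|\,d\lambda=\mu^{-1}\|f\|_2^2$ together with the truncated estimate $\int_0^{A}\lambda^{-1}\,d\lambda$-type bound over a range of length $\sim\log N$ yields the claim. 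The main obstacle is bookkeeping the two parameters ($\mu$ and the crossover $\lambda$) so that the $N^p$ and $N$ factors are absorbed and only $\log N$ survives; one must make sure the $L^p$ hypothesis is invoked on exactly the portion of $\lambda$-space where neither the $L^1$ nor the $L^2$ bound alone suffices, and that the three hypotheses are genuinely all used. A secondary point to handle carefully is that $T$ is only sublinear, so $Tf\le Tg+Tb$ pointwise must be used in the form $\{Tf>2\lambda\}\subset\{Tg>\lambda\}\cup\{Tb>\lambda\}$, which is fine; and the CZ decomposition here is the trivial ``truncation'' one (no cubes needed), since we only have abstract $L^p$ hypotheses on a general measure space.
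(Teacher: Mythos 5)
Your overall strategy --- decompose $f$ at a $\lambda$-dependent height, apply the three hypotheses to the different pieces, and integrate the resulting distributional estimates against $\lambda\,d\lambda$ --- is exactly the spirit of the paper's argument. But there is a genuine gap in the execution: a single truncation at height $\mu\lambda$ does not suffice, and as a result the $L^p$ hypothesis never gets used in a way that closes the estimate.

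Concretely, once you set $\mu\sim N$ (which you must, to make the $b$-contribution $N\mu^{-1}\|f\|_2^2$ acceptable), the good part $g=f\,1_{\{|f|\le\mu\lambda\}}$ has $\|g\|_2^2 = \int_{|f|\le\mu\lambda}|f|^2$, and the weak $L^2$ bound gives a contribution
$$\int_0^\infty \lambda^{-1}\Bigl(\int_{|f|\le\mu\lambda}|f|^2\Bigr)\,d\lambda = \int |f|^2 \int_{|f|/\mu}^{\infty}\lambda^{-1}\,d\lambda = \infty,$$
diverging at $\lambda=\infty$. Interpolating with the $L^p$ bound, as in your first estimate for $g$, only worsens the constant and still leaves $\int\lambda^{-1}\,d\lambda$ with no upper cutoff. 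This is why the paper uses a \emph{three}-part split: $f_1^\lambda$ on $\{|f|>N\lambda\}$, $f_2^\lambda$ on $\{N^{-q}\lambda<|f|\le N\lambda\}$, $f_3^\lambda$ on $\{|f|\le N^{-q}\lambda\}$, with $q(p-2)=p$. The lower cutoff $N^{-q}\lambda$ is what makes the middle term's $\lambda$-integral finite: after Fubini, each height $\alpha$ contributes only over $\lambda\in[\alpha/N,\,N^q\alpha]$, a logarithmic interval, producing exactly the $\log N$. The $L^p\to L^p$ hypothesis is then applied to $f_3^\lambda$, the part where $|f|$ is \emph{small} relative to $\lambda$ --- not to the ``bad'' part, as you assert (``the role of the $L^p$ bound is really to control the bad part'' is backwards; the bad/unbounded part is what $L^1\to L^{1,\infty}$ is for).

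A secondary problem: your displayed bound using $\min(\cdot,\cdot)$ is not what the decomposition gives you. From $\{Tf>2\lambda\}\subset\{Tg>\lambda\}\cup\{Tb>\lambda\}$ you obtain a \emph{sum} of the two measures, not a minimum, and the divergence above shows the sum cannot be made finite without the extra cutoff. Introducing the lower truncation $N^{-q}\lambda$ (and applying $L^p$ to that piece, $L^1$-weak to the part above $N\lambda$, and $L^2$-weak only in the middle band) repairs the argument and reduces it to the paper's proof.
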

\begin{proof}
We need to prove that
$$\int_{0}^{\infty}\lambda|\{Tf>3\lambda\}|d\lambda\lesssim \log N\|f\|_2^2.$$
Let
$$\nu_f(\alpha):=|\{|f|>\alpha\}|.$$
Split
$$f=f_1^{\lambda}+f_2^{\lambda}+f_3^{\lambda},$$
where
\[f_1^{\lambda}=\begin{cases}f,&:\quad \hbox{if } |f|>N\lambda\\ \hfill  0&:\quad \text{otherwise}\end{cases}, \]
\[f_2^{\lambda}=\begin{cases}f,&:\quad \hbox{if } N\lambda\ge |f|>N^{-q}\lambda\\ \hfill  0&:\quad \text{otherwise}\end{cases}, \]
\[f_3^{\lambda}=\begin{cases}f,&:\quad \hbox{if } N^{-q}\lambda\ge |f|\\ \hfill  0&:\quad \text{otherwise}\end{cases}. \]
where $q(p-2)=p.$
Now,
$$\int_{0}^{\infty}\lambda|\{Tf_1^{\lambda}>\lambda\}|d\lambda\lesssim N\int_{0}^{\infty}\|f_1^{\lambda}\|_1d\lambda=N\int_{0}^{\infty}\int_{N\lambda}^{\infty}\nu_f(\alpha)d\alpha d\lambda=$$
$$=N\int_{0}^{\infty}\nu_f(\alpha)\int_{0}^{\alpha/N}d\lambda d\alpha=\|f\|_2^2,$$
$$\int_{0}^{\infty}\lambda|\{Tf_2^{\lambda}>\lambda\}|d\lambda\lesssim \int_{0}^{\infty}\lambda^{-1}\|f_2^{\lambda}\|_2^2d\lambda=\int_{0}^{\infty}\lambda^{-1}\int_{N^{-q}\lambda}^{N\lambda}\alpha\nu_f(\alpha)d\alpha d\lambda=$$
$$=\int_{0}^{\infty}\alpha\nu_f(\alpha)\int_{\alpha/N}^{N^q\alpha}\lambda^{-1}d\lambda d\alpha\lesssim (\log N)\|f\|_2^2$$
$$\int_{0}^{\infty}\lambda|\{Tf_3^{\lambda}>\lambda\}|d\lambda\lesssim N^p\int_{0}^{\infty}\lambda^{1-p}\|f_3^{\lambda}\|_p^pd\lambda=N^{p}\int_{0}^{\infty}\lambda^{1-p}\int_{0}^{N^{-q}\lambda}\alpha^{p-1}\nu_f(\alpha)d\alpha d\lambda=$$
$$=N^p\int_{0}^{\infty}\alpha^{p-1}\nu_f(\alpha)\int_{N^q\alpha}^{\infty}\lambda^{1-p}d\lambda d\alpha\lesssim \|f\|_2^2$$
\end{proof}
\begin{proposition}
\label{P6}
$$\|G(f)\|_2\le C \sqrt{\log N}\|f\|_2,$$
with $C$ independent of $N$.
\end{proposition}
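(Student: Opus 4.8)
The plan is to bound $\|G(f)\|_2$ by passing through the key quantity $\sup_{v\in\Sigma_N}|T_v(S_kf)|$ at each frequency scale $k$, and to exploit the almost-orthogonality of the Littlewood--Paley pieces $S_kf$. Precisely, since $M_2$ and $M$ are bounded on $L^2$ (here $M_2$ is bounded on $L^4$, hence $M_2 M$ maps $L^2$ to $L^2$ after the appropriate interpolation — note $\|M_2 g\|_2 = \|M(g^2)\|_1^{1/2}$, which is \emph{not} controlled by $\|g\|_2$; so one must be more careful and keep the $M_2 M$ applied to $\sup_v|T_v(S_kf)|$ and estimate in $L^2$ via $\|M_2(Mh)\|_2^2 = \|M((Mh)^2)\|_1 \lesssim \|(Mh)^2\|_1 = \|Mh\|_2^2 \lesssim \|h\|_2^2$), we get
\begin{equation*}
\|G(f)\|_2^2 \lesssim \sum_{k\in\Z} \|M_2(M(\sup_{v\in\Sigma_N}|T_v(S_kf)|))\|_2^2 \lesssim \sum_{k\in\Z} \|\sup_{v\in\Sigma_N}|T_v(S_kf)|\|_2^2.
\end{equation*}
So it suffices to prove the single-scale bound $\|\sup_{v\in\Sigma_N}|T_v(S_kf)|\|_2 \lesssim \sqrt{\log N}\,\|S_kf\|_2$ with constant uniform in $k$, since then $\sum_k \|S_kf\|_2^2 \lesssim \|f\|_2^2$ by Littlewood--Paley theory closes the estimate.

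For the single-scale bound I would invoke Lemma \ref{L3}: the operator $f \mapsto \sup_{v\in\Sigma_N}|T_v(S_kf)|$ should be shown to map $L^1\to L^{1,\infty}$ and $L^p\to L^p$ with norm $\lesssim N$, and $L^2\to L^{2,\infty}$ with norm $\lesssim 1$. The $L^p$ and $L^1\to L^{1,\infty}$ bounds with loss $N$ are crude and follow by summing the $N$ single-operator bounds trivially: each $T_v S_k$ is bounded on every $L^p$, $1<p<\infty$, uniformly in $v$ and $k$ (the multiplier $m(v_1\xi+v_2\eta)\phi(2^{-k}\xi)$ is a Mikhlin multiplier with bounds uniform in $v,k$), and $T_v S_k$ is weak-$(1,1)$ uniformly — so bounding the sup by the $\ell^1$ sum over the $N$ directions gives the factor $N$. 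The heart of the matter is the $L^2\to L^{2,\infty}$ bound with constant independent of $N$: this is exactly where the Lacey--Li single-annulus theorem \cite{LL} enters. Because $S_kf$ has frequency support in the annulus $2^{k-1}\le|\xi|\le 2^{k+1}$, the operator $\sup_v|T_v(S_kf)|$ is (after rescaling to a unit annulus) precisely the single-annulus directional maximal operator controlled by Lacey--Li, giving $\|\sup_v|T_v(S_kf)|\|_{2,\infty}\lesssim \|S_kf\|_2$ with an \emph{absolute} constant — crucially no $\log N$ and no dependence on $k$. Feeding these three estimates into Lemma \ref{L3} yields the $\sqrt{\log N}$ gain.

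The main obstacle I anticipate is verifying cleanly that the maximal operator $f\mapsto\sup_{v\in\Sigma_N}|T_v(S_kf)|$ genuinely falls under the scope of the Lacey--Li result with a constant uniform in $k$ and with no dependence on $N$ beyond what is allowed. One has to rescale the $k$-th annulus to a fixed annulus, check that the symbol $m(v_1\xi+v_2\eta)$ restricted there is a uniformly smooth bump (using the Mikhlin bounds on $m$), and confirm that Lacey--Li's theorem indeed bounds the supremum over an \emph{arbitrary finite} set of directions of such single-annulus Hilbert-transform-type operators in $L^2$ with an absolute constant — this is the content of their work (equivalent to Carleson's theorem via the directional structure), but stating it in the precise form needed, including handling the truncation/smoothing built into $\phi,\beta,\psi$ and the maximal functions $M_2, M$, requires care. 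A secondary technical point is making sure the interchange "$M_2 M$ applied to the sup, then $L^2$" really does reduce to $\|\sup_v|T_v(S_kf)|\|_2$-type control without losing the uniformity, which I addressed above but must be written carefully to avoid the trap that $M_2$ is not $L^2$-bounded on its own.

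\begin{proof}
By the discussion above it suffices to establish the single-scale estimate
\begin{equation}
\label{singlescale}
\Big\|\sup_{v\in\Sigma_N}|T_v(S_kf)|\Big\|_2 \lesssim \sqrt{\log N}\,\|S_kf\|_2,
\end{equation}
with implied constant independent of $k$ and $N$. Indeed, granting \eqref{singlescale}, we use that for any $h$,
\begin{equation*}
\|M_2(Mh)\|_2^2 = \|M((Mh)^2)\|_1 \lesssim \|(Mh)^2\|_1 = \|Mh\|_2^2 \lesssim \|h\|_2^2,
\end{equation*}
by the $L^2$-boundedness of $M$, so that
\begin{equation*}
\|G(f)\|_2^2 \lesssim \sum_{k\in\Z}\Big\|M_2\Big(M\big(\sup_{v\in\Sigma_N}|T_v(S_kf)|\big)\Big)\Big\|_2^2 \lesssim \sum_{k\in\Z}\Big\|\sup_{v\in\Sigma_N}|T_v(S_kf)|\Big\|_2^2 \lesssim \log N\sum_{k\in\Z}\|S_kf\|_2^2 \lesssim \log N\,\|f\|_2^2,
\end{equation*}
where the last step is Littlewood--Paley theory for the $\phi$ decomposition. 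This is the claimed bound.

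It remains to prove \eqref{singlescale}, which we do via Lemma \ref{L3} applied to the sublinear operator $U_k\colon g\mapsto \sup_{v\in\Sigma_N}|T_v(S_kg)|$ on $\R^2$. First, for fixed $v$ and $k$ the multiplier $\xi\mapsto m(v_1\xi_1+v_2\xi_2)\phi(2^{-k}\xi)$ satisfies H\"ormander--Mikhlin bounds uniformly in $v$ and $k$: on the support of $\phi(2^{-k}\cdot)$ one has $|\xi|\sim 2^k$, each derivative of $\phi(2^{-k}\cdot)$ costs $2^{-k}\sim|\xi|^{-1}$, and the hypothesis on $m$ gives $|\partial^\alpha_s m(s)|\lesssim_\alpha |s|^{-\alpha}$ together with the fact that $v$ is a unit vector. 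Hence $T_v S_k$ is bounded on every $L^p(\R^2)$, $1<p<\infty$, and is of weak type $(1,1)$, with bounds uniform in $v,k$. Summing the $N$ individual bounds trivially gives, for each fixed $k$,
\begin{equation*}
\|U_k\|_{L^1\to L^{1,\infty}}\lesssim N,\qquad \|U_k\|_{L^p\to L^p}\lesssim N \quad (2<p<\infty),
\end{equation*}
with implied constants independent of $k$ and $N$.

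The remaining input is the bound $\|U_k\|_{L^2\to L^{2,\infty}}\lesssim 1$, uniformly in $k$ and $N$. Since $S_kg$ has Fourier support in the annulus $\{2^{k-1}\le|\xi|\le 2^{k+1}\}$, rescaling $\xi\mapsto 2^k\xi$ reduces $U_k$ to the single-annulus directional maximal operator on a fixed annulus with symbols $v\mapsto m(v_1\xi_1+v_2\xi_2)\phi(\xi)$, each a fixed smooth bump by the Mikhlin bounds on $m$; the rescaling is an $L^2$-isometry up to normalization and does not affect the claimed estimate. The theorem of Lacey and Li \cite{LL}, which is essentially equivalent to Carleson's theorem \cite{Car}, asserts precisely that the supremum over an arbitrary finite collection of directions of such single-annulus operators is bounded from $L^2(\R^2)$ to $L^{2,\infty}(\R^2)$ with an absolute constant, with no dependence on the number of directions. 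This yields $\|U_k\|_{L^2\to L^{2,\infty}}\lesssim 1$ uniformly in $k$ and $N$. Feeding the three estimates into Lemma \ref{L3} gives \eqref{singlescale}, and the proof is complete.
\end{proof}
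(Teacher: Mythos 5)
Your overall strategy matches the paper's exactly: reduce $\|G(f)\|_2$ to a single Littlewood--Paley scale, then obtain the single-scale $\sqrt{\log N}$ bound from Lemma~\ref{L3} by combining the Lacey--Li weak~$L^2$ estimate with union-bound $L^1\to L^{1,\infty}$ and $L^p\to L^p$ estimates of size $N$. That part is a faithful, fleshed-out version of the paper's terse argument.

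However, your explicit justification of the reduction step is wrong. You assert
$\|M_2(Mh)\|_2^2=\|M((Mh)^2)\|_1\lesssim\|(Mh)^2\|_1$,
which invokes $L^1$-boundedness of the Hardy--Littlewood maximal operator; $M$ is only of weak type $(1,1)$, not $L^1$-bounded. In fact $M_2\circ M$ is \emph{not} bounded on $L^2$: taking $h=1_{[0,1]}$ on $\R$ gives $Mh(x)\sim |x|^{-1}$ for $|x|\ge 1$, so $(Mh)^2$ decays like $|x|^{-2}$ and has a bump near the origin, whence $M((Mh)^2)(x)\gtrsim |x|^{-1}$ at infinity and $\|M_2(Mh)\|_2=\infty$ while $\|h\|_2=1$. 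So the chain of inequalities you use to reduce to the single-scale estimate fails, and the reduction is not established. The paper itself leaves the ``it suffices'' step unexplained; your attempt to fill it in exposes a real subtlety. One way to repair it is to replace $M_2$ by $M_r g=(M(|g|^r))^{1/r}$ for some $1<r<2$, for which $M_r\circ M$ is bounded on $L^2$ and the $\ell^2$-valued Fefferman--Stein inequality does apply (since $2/r>1$), but this requires that the pointwise bound from Sublemma~4.2 of \cite{GHS} quoted in the paper be available with $M_r$ in place of $M_2$, which you would need to check or cite.

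A secondary issue: you justify the uniform weak-$(1,1)$ and $L^p$ bounds for $T_vS_k$ by claiming the symbol $m(v_1\xi_1+v_2\xi_2)\phi(2^{-k}\xi)$ satisfies a two-dimensional Mikhlin condition uniformly in $v$. It does not: one has $|\partial_\xi\,m(v\cdot\xi)|\lesssim |v\cdot\xi|^{-1}$, and $|v\cdot\xi|$ can be arbitrarily small compared with $|\xi|$ on the annulus. The correct route, and the one implicit in the paper's one line ``since each $T_v$ maps $L^1$ to $L^{1,\infty}$'', is that after a rotation $T_v$ acts as a one-dimensional singular integral in a single coordinate, hence is of weak type $(1,1)$ and $L^p$-bounded on $\R^2$ by Fubini, and $S_k$ is $L^1$-bounded; composing gives the needed bounds with constants independent of $v,k$. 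This gap is fixable, but the Mikhlin-based justification as written is not valid.
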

\begin{proof}
It suffices to prove that for each $k$
$$\|A_k(f):=\sup_{v\in\Sigma_N}|T_v(S_kf)|\|_2\le C \sqrt{\log N}\|S_kf\|_2.$$
It was proved in \cite{LL} that $A_k$ maps $L^{2}$ to $L^{2,\infty}$ and $L^p$ to itself for $2<p<\infty$, with implicit bounds independent of $N$. That is a deep result, essentially equivalent with Carleson's theorem \cite{Car}.

Since each $T_v$ maps $L^1$ to $L^{1,\infty}$,
$$\|A_k(f)\|_{1,\infty}\lesssim N\|f\|_1.$$ The result now follows from Lemma \ref{L3} above.
\end{proof}
\begin{proof}[of Theorem \ref{T1}]
By using a limiting argument, we can assume that $m$ is supported in a finite union of dyadic annuli $2^j\le |\xi|<2^{j+1}$. By rescaling, we can assume $m$ is actually supported away from the ball of radius $2^{2N}$ (for example).
Let $\epsilon_N=\sqrt{c_1\log N}$. For each $\lambda>0$,
$$\{x:\sup_{v\in\Sigma_N}|T_vf(x)|>4\lambda\}\subset E_{\lambda,1}\cup E_{\lambda,2}\cup E_{\lambda,3},$$
where
$$E_{\lambda,1}=\{x:\sup_{v\in\Sigma_N}|T_vf(x)-E_0T_vf(x)|>2\lambda,Gf(x)\le \epsilon_N\lambda\}$$
$$E_{\lambda,2}=\{x:Gf(x)> \epsilon_N\lambda\}$$
$$E_{\lambda,3}=\{x:\sup_{v\in\Sigma_N}|E_0T_vf(x)|>2\lambda\}.$$
By Lemma \ref{L5},
$$|E_{\lambda,1}|\le \sum_{v\in\Sigma_N}|\{x:|T_vf(x)-E_0T_vf(x)|>2\lambda,\Delta(T_vf)(x)\le \epsilon_N\lambda\}|\lesssim$$
$$\lesssim \frac{1}{N}\sum_{v\in\Sigma_N}|\{x:M(T_vf)(x)>\epsilon_N\lambda\}|.$$
Using this, Lemma \ref{L2} and Proposition \ref{P6}, we get that
$$\int_{0}^{\infty}\lambda\sum_{i=1}^3|E_{\lambda,i}| d\lambda\lesssim (\log N)^2\|f\|_2^2.$$
Theorem \ref{T1} now follows.
\end{proof}

The following result shows that the bound in Theorem \ref{T1} is optimal.

\begin{proposition}
Let $\Sigma_N$ a set of $N$ equidistributed unit vectors in $\R^2$. Define
$$T_{N}^{*}f(x,y):=\sup_{v\in\Sigma_N}|\lim_{\epsilon\to\infty}\int_{|t|>\epsilon}f((x,y)+tv)\frac{dt}{t}|.$$
Then,
$$\|T_N^{*}\|_{2\to 2}\gtrsim (\log N).$$
\end{proposition}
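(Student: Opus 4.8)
The plan is to exhibit a single function $f$ on which the lower bound is realized, essentially the indicator of a small ball, and to track how much "mass" each direction's truncated Hilbert transform spreads out. Fix $\Sigma_N$ to be the $N$ equidistributed directions $v_j = (\cos(\pi j/N), \sin(\pi j/N))$, $j = 0, \dots, N-1$, and take $f = \one_{B}$ where $B$ is, say, the ball of radius $1$ centered at the origin (or a disc of unit area — the precise normalization is harmless). Fix a point $x = (x_1, x_2)$ in an annular region $2 \le |x| \le R$ for a large parameter $R$ to be chosen. The key geometric observation is that the maximal truncation $\sup_{v} |\lim_{\eps \to \infty}\int_{|t| > \eps} f(x + tv)\frac{dt}{t}|$ is, for $x$ outside the support of $f$, comparable to $\sup_v |\int f(x+tv)\frac{dt}{t}|$ (the truncation at infinity is the one forced by convergence, and near $t=0$ the integrand is smooth since $x \notin B$), and this integral is large precisely when the line through $x$ in direction $v$ passes close to $B$.

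The main computation is the following. For a given $x$ with $|x|$ large, there is a direction $v = v(x) \in \Sigma_N$ whose line through $x$ comes within distance $O(|x|/N)$ of the origin — this is just the pigeonhole/equidistribution fact that the angular sectors have width $\pi/N$. For that direction, $f(x + tv)$ as a function of $t$ is supported on an interval $I$ of length $\sim 2$ located at distance $\sim |x|$ from $t = 0$, hence $|\int_I f(x+tv)\frac{dt}{t}| \sim \frac{1}{|x|}$. Wait — that alone only gives an $L^2$ norm of order $1$, not $\log N$. The improvement comes from summing over directions: one shows that for \emph{each} $x$ in the annulus, not just one but roughly $\log N$ \emph{dyadically separated scales} of directions contribute. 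More precisely, I would instead fix the test function to be a sum of indicators of $\sim \log N$ balls $B_\ell$ of radius $2^{-\ell}$, $\ell = 1, \dots, L$ with $L \sim \log N$, placed so that for a large set of $x$ (of measure $\gtrsim 1$) and for \emph{every} $\ell$ there is a direction $v_{\ell} \in \Sigma_N$ with $T_{v_\ell}f(x) \gtrsim 1$; since the $v_\ell$ can be taken distinct, $\sup_v |T_v f(x)|$ picks up one such term, but the point is that summing the \emph{contributions within a single direction} across the $L$ nested balls gives $\sum_\ell \frac{1}{2^{-\ell}} 2^{-\ell} \sim L \sim \log N$ on a set of positive measure, while $\|f\|_2^2 \sim \sum_\ell 2^{-2\ell} \sim 1$. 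This is the standard mechanism (cf. the lower bound for the Besicovitch/Kakeya maximal function) by which nested structure at $\log N$ scales, matched to $N$ directions via $2^{-L} \sim 1/N$, forces the $\log N$ loss.

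Concretely, the cleanest route is: place balls $B_\ell$ of radius $2^{-\ell}$ centered at points $c_\ell$ chosen along a fixed line through the origin, with $|c_\ell| \sim 2^{-\ell}$; let $f = \sum_{\ell=1}^{L} \one_{B_\ell}$ with $L = \lfloor \log_2 N \rfloor$. Then $\|f\|_2^2 \lesssim \sum_\ell |B_\ell| \lesssim \sum_\ell 2^{-2\ell} \lesssim 1$ (the balls are essentially disjoint if the $c_\ell$ are chosen on alternating sides). For $x$ in a unit-sized region $U$ bounded away from $0$, and for each $\ell$, the equidistribution of $\Sigma_N$ guarantees a direction $v$ with the line $\{x + tv\}$ passing through $B_\ell$ provided the angular resolution $\pi/N$ beats the angular size $\sim 2^{-\ell}/\dist(x, B_\ell) \sim 2^{-\ell}$ of $B_\ell$ seen from $x$ — which holds for all $\ell \le L \sim \log N$. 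For such $v$ and $\ell$, $|T_v f(x)| \ge |\int_{t : x+tv \in B_\ell} \frac{dt}{t}| - (\text{contributions from other } B_{\ell'})$; the main term is $\sim \frac{2^{-\ell}}{2^{-\ell}} \sim 1$ and one checks the cross terms are summable and smaller, so $\sup_v |T_v f(x)| \gtrsim 1$ on $U$. This only yields a constant lower bound, so to actually get $\log N$ one must arrange that a \emph{single} direction $v$ works simultaneously for many $\ell$: choose all the $B_\ell$ centered on one ray from the origin and on the same side, so that the line through $x$ in the single direction pointing along that ray meets \emph{all} of $B_1, \dots, B_L$, contributing $\sum_{\ell=1}^L \frac{2^{-\ell}}{2^{-\ell}} \sim L \sim \log N$ to $|T_v f(x)|$. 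Then $\sup_v |T_v f(x)| \gtrsim \log N$ on $U$, $|U| \gtrsim 1$, and $\|f\|_2 \lesssim 1$, giving $\|T_N^*\|_{2\to 2} \gtrsim \log N$.

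The step I expect to be the main obstacle is controlling the \emph{cancellation} in $T_v f(x) = \text{p.v.}\int f(x+tv)\frac{dt}{t}$: because $\frac1t$ changes sign, contributions from balls on opposite sides of $x$ could cancel, and even on the same side one must verify the partial sums $\sum_\ell \int_{B_\ell\text{-part}} \frac{dt}{t}$ genuinely add up with a consistent sign rather than oscillating. This is handled by placing all the $B_\ell$ on the same side of the region $U$ along a common ray so that $t$ has a fixed sign on every relevant interval, and by making the balls lacunary ($2^{-\ell}$) so the log-type sum $\sum \frac{\text{length}}{\text{distance}}$ is a clean geometric-in-scale sum of comparably-sized positive terms; one also checks that the truncation at infinity (the only truncation present in the operator as defined) does not remove any of these contributions, and that the absence of a small-scale truncation is harmless since $x$ is outside $\supp f$. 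A careful but routine estimate of the error terms — the tails where the line through $x$ in direction $v$ just grazes a ball, and the dependence of the "good" set $U$ on $N$ — completes the argument.
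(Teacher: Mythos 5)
The core of your construction does not give the claimed lower bound, and the error is in the single arithmetic step on which everything rests. You write that the contribution of $B_\ell$ to $|T_v f(x)|$ is $\sim \frac{2^{-\ell}}{2^{-\ell}} \sim 1$, and later that the nested configuration contributes $\sum_{\ell=1}^L \frac{2^{-\ell}}{2^{-\ell}} \sim L$. But the contribution is $\int_{I_\ell} \frac{dt}{t}$, where $I_\ell$ is the set of $t$ with $x+tv \in B_\ell$: an interval of length $\lesssim 2^{-\ell}$ sitting at distance $\sim |x - c_\ell|$ from $t=0$. In your configuration $x$ is at distance $\sim 1$ from the origin and $c_\ell$ is at distance $\sim 2^{-\ell}$ from the origin, so $|x - c_\ell| \sim 1$, and the contribution is $\sim \frac{2^{-\ell}}{1} = 2^{-\ell}$, not $\sim 1$. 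Summing in $\ell$ gives $O(1)$, not $\log N$; the construction yields $\|T_N^*\|_{2\to 2}\gtrsim 1$, which is trivial. Trying to repair it by moving $x$ to within $\sim 2^{-\ell}$ of $c_\ell$ (so that the denominator really is $2^{-\ell}$) forces the good set $U$ to have measure $\lesssim N^{-2}$, which destroys the $L^2$ count. The fundamental obstruction is that with an unweighted indicator $f = \sum_\ell \one_{B_\ell}$ placed at lacunary distances, the kernel $1/t$ is essentially constant over each $I_\ell$ (all the $I_\ell$ live near the same $t$), so you only see the total length $\sum_\ell 2^{-\ell} \lesssim 1$, not $L$ separate unit contributions.

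The paper's example is different in a way that matters. It takes $f(x,y) = \frac{1}{\|(x,y)\|}\one_{1 \le \|(x,y)\| \le N/100}$, which has equal $L^2$ mass $\sim 1$ on each of the $\sim \log N$ dyadic annuli, so $\|f\|_2^2 \sim \log N$. For $x$ at radius $r$ and the radial direction $v$, a direct partial-fractions computation gives $|T_v f(x)| \sim \frac{\log r}{r}$: the logarithm comes from $f$ itself decaying like $1/s$ near the inner edge of its support, not from the $1/t$ kernel near its singularity. Then $\|T_N^* f\|_2^2 \gtrsim \int_{100}^{N/100} \bigl(\tfrac{\log r}{r}\bigr)^2 r\,dr \sim (\log N)^3$, and the ratio is $\log N$. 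Your high-level plan (equidistribution to snap to a grid direction, nested scales matched to $2^{-L}\sim 1/N$) is the right instinct and in fact the paper also uses the snap-to-grid step via $\|v - w\|\le 10/N$; what is missing in your version is the $1/r$ weight, which is exactly what makes each dyadic scale contribute a bounded amount rather than a geometrically decaying one. If you replace $\sum_\ell \one_{B_\ell}$ with $\sum_\ell 2^{-\ell}\one_{A_\ell}$ for dyadic annuli $A_\ell$ of radius $\sim 2^\ell$, you recover the paper's function and the argument goes through.
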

\begin{proof}
This is a standard example, we only briefly sketch the details. Define $f(x,y)=\frac{1}{\|(x,y)\|}1_{1\le \|(x,y)\|\le N/100}$.

Let $100\le \|(x,y)\|\le N/100$. If $v:=\frac{(x,y)}{\|(x,y)\|}$, then
$$|\lim_{\epsilon\to\infty}\int_{|t|>\epsilon}f((x,y)+tv)\frac{dt}{t}|=\frac{1}{\|(x,y)\|}|\log\frac{N^2-\|(x,y)\|^2}{N^2}-\ln (\|(x,y)\|^2-1)|
$$$$\ge \frac{\log \|(x,y)\|}{2\|(x,y)\|}.$$
Let $w\in \Sigma_N$ be such that $\|v-w\|\le \frac{10}{N}$. It easily follows that
$$|\lim_{\epsilon\to\infty}\int_{|t|>\epsilon}f((x,y)+tv)\frac{dt}{t}-\lim_{\epsilon\to\infty}\int_{|t|>\epsilon}f((x,y)+tw)\frac{dt}{t}|<\frac{\log \|x\|}{10\|x\|},$$
simply from the fact that $\|tv-tw\|\le 1/5$ if $|t|<N/50$ and the fact that $|f(x,y)-f(x',y')|\le \frac{2\|(x,y)-(x',y')\|}{\|(x,y)\|^2}$ if $1\le \|(x,y)\|,\|(x',y')\|\le N/100$ and $\|(x,y)-(x',y')\|\le 1$. Thus,
$$T_N^{*}f(x,y)\ge |\lim_{\epsilon\to\infty}\int_{|t|>\epsilon}f((x,y)+tw)\frac{dt}{t}|\ge \frac{\log \|(x,y)\|}{4\|(x,y)\|},$$
and hence
$$\|T_N^{*}f\|_2\gtrsim (\log N)^{3/2}.$$
Since $\|f\|_2\lesssim (\log N)^{1/2},$ the Proposition follows.
\end{proof}

\section{Proof of Theorem \ref{T7}}
\label{S4}
First, let us remark that there is a proof along the lines of the argument for Theorem \ref{T1}. We however choose to give a different, self contained argument, one that in particular does not appeal to the Chang-Wilson-Wolff inequality.

It suffices to prove the bound for
$$M_{0}F(x,y):=\sup_{v\in\Sigma_N}|\int F(x+tv_1,y+tv_2)\psi(t)dt|,$$
where $\psi$ is a positive function such that $\hat{\psi}$ is supported in $[0,1]$.

Decompose
$$F=F_0+\sum_{n\ge 1}F_n,$$
such that
$$\|F\|_2^2=\|F_0\|_2^2+\sum_{n\ge 1}\|F_n\|_2^2.$$
Here $F_0$ is the Fourier restriction to the unit ball $B$ (that is, $\widehat{F_0}=\hat{F}1_B$), while $F_n$ the Fourier restriction to the annulus $2^{n-1}\le |(\xi,\eta)|\le 2^{n}$. Note that
$$M_0F_0(x,y)=\sup_{v\in \Sigma_N}|\int \hat{F_0}(\xi,\eta)\phi(\xi,\eta)\hat{\psi}(v_1\xi+v_2\eta)e^{i(x\xi+y\eta)}d\xi d\eta|\lesssim MF_0(x,y),$$
where $M$ is the Hardy-Littlewood maximal function, and
$$1_B\le \phi\le 1_{2B}$$
is smooth. This is
since $K_v(x,y):=(\phi(\xi,\eta)\hat{\psi}(v_1\xi+v_2\eta))\check (x,y)$ easily satisfies
$$|K_v(x,y)|\lesssim (1+\|(x,y)\|)^{-3},$$
with bound independent of $v$, and $M_0F_0(x,y)=\sup_{v\in \Sigma_N}|F_0*K_v(x,y)|$.

Next, we analyze the case $n\ge 1$.

\begin{definition}[Grids]
A collection of intervals on the unit circle $S^1$ is called a grid if for any two intervals from the collection that intersect, one of them should contain the other one.
\end{definition}
The standard dyadic grid $\G^0$ is the collection dyadic intervals on $S^1$, obtained by identifying $S^1$ with $[0,1)$. In addition, for $i\in\{1/3,2/3\}$, define
$\G^i$ to consist of the projection on $S^1$ of the intervals
$$\{[2^jk,2^j(k+(-1)^ji)]:2^{j}\le 1,k\in\Z\}.$$

It is easy to see that each $\G^i$ is a  grid. Moreover,  each interval $I$ on the circle is contained inside an interval of similar length from one of these grids.

Denote by $\Omega_n$ any partition of the annulus $2^{n-1}\le |(\xi,\eta)|\le 2^{n}$ in $n$ sectors with equal area. For each ${\omega}\in\Omega_n$, call $d(\omega)$ the direction of the radius that splits $\omega$ in two sectors with equal area. For each $\omega\in\Omega_n$, let $\tilde{\omega}$ be the sector in the larger annulus $2^{n-2}\le |(\xi,\eta)|\le 2^{n+1}$, with twice the aperture
of $\omega$ and having the same bisector $d({\omega})$. Let also $A(\omega)$ be the projection of the sector $\omega$ on $S^1$. Denote by $B(\omega)$  some interval from some grid $\G^i$ that contains $10A(\omega)+\frac{\pi}{2}$\footnote{$10A(\omega)$ is the interval on $S^1$ with the same center as $A(\omega)$ and 10 times its length}, and whose length is comparable to that of $A(\omega)$, which in turn is comparable to $2^{-n}$.

Decompose further
$$F_n=\sum_{\omega\in\Omega_n}F_\omega,$$
where $\widehat{F_\omega}=\hat{F}1_{\omega}.$
Note that
$$\int F_\omega(x+tv_1,y+tv_2)\psi(t)dt=\int\hat{F_\omega}(\xi,\eta)\hat{\psi}(v_1\xi+v_2\eta)e^{i(x\xi+y\eta)}d\xi d\eta$$
is nonzero only if there exists $(\xi,\eta)\in{\omega}$ such that $0\le v_1\xi+v_2\eta\le 1$. This implies that  $v\in 10A(\omega)+\frac{\pi}{2}\subset B(\omega)$.

Let $\theta_\omega$ be a bump function adapted to and supported on $\tilde{\omega}$ and which equals 1 on $\omega$. Note that
$$\int F_\omega(x+tv_1,y+tv_2)\psi(t)dt=\int\hat{F_\omega}(\xi,\eta)\theta_{\omega}(\xi,\eta)\hat{\psi}(v_1\xi+v_2\eta)e^{i(x\xi+y\eta)}d\xi d\eta.$$
\begin{lemma}
\label{L543}
If $v,v'\in B(\omega)$ and $\omega\in \Omega_n$, then
$$|\int F_\omega(x+tv_1,y+tv_2)\psi(t)dt-\int F_\omega(x+tv_1',y+tv_2')\psi(t)dt|\lesssim$$
\begin{equation}
\label{1}
\lesssim 2^n\|v-v'\|M^{*}F(x,y),
\end{equation}
where
$$M^{*}F(x,y)=\sup_{\epsilon,\delta>0}\frac{1}{\epsilon\delta}\int_{|t|<\epsilon}\int_{|s|<\delta}|F|(x+t,y+s)dtds.$$
\end{lemma}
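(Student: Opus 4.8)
The plan is to pass to the Fourier side, differentiate the relevant exponential in the direction parameter $v$, and control the resulting error by a bounded multiplier composed with the strong maximal function $M^\ast$. Writing $G_v(x,y):=\int F_\omega(x+tv_1,y+tv_2)\psi(t)\,dt$, we use the representation
$$G_v(x,y)=\int \widehat{F_\omega}(\xi,\eta)\theta_\omega(\xi,\eta)\hat\psi(v_1\xi+v_2\eta)e^{i(x\xi+y\eta)}\,d\xi\,d\eta.$$
Since $v,v'\in B(\omega)$ and $B(\omega)$ has length comparable to $2^{-n}$, and $\theta_\omega$ is supported on $\tilde\omega\subset\{|(\xi,\eta)|\le 2^{n+1}\}$, the phase $(v_1-v_1')\xi+(v_2-v_2')\eta$ stays bounded by $\lesssim 2^n\|v-v'\|$ on the support of $\widehat{F_\omega}\theta_\omega$. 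Thus I would write $G_v-G_{v'}$ as an operator applied to $F_\omega$ whose multiplier is
$$\theta_\omega(\xi,\eta)\bigl(\hat\psi(v_1\xi+v_2\eta)-\hat\psi(v_1'\xi+v_2'\eta)\bigr),$$
and the goal becomes showing this multiplier is the Fourier transform of a kernel with $L^1$ norm $\lesssim 2^n\|v-v'\|$, after rescaling by $2^n$.

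The key step is a rescaling and kernel estimate. Rescale $(\xi,\eta)\mapsto 2^n(\xi,\eta)$ so that $\tilde\omega$ becomes a fixed sector of the annulus $\{1/4\le|(\xi,\eta)|\le 2\}$ of aperture $\sim 1/n$, with bisector $d(\omega)$; correspondingly $x,y$ get scaled by $2^{-n}$. After this rescaling the difference of the two $\hat\psi$ terms has argument $2^n v\cdot(\xi,\eta)$ minus $2^n v'\cdot(\xi,\eta)$, which now varies over an interval of length $\lesssim 2^n\|v-v'\|$, while $\hat\psi$ is smooth and supported in $[0,1]$. Hence by the mean value theorem the difference is $\lesssim 2^n\|v-v'\|$ times a smooth bump of the same type. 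It remains to check that $\theta_\omega$ (rescaled to the unit sector) times such a bump in the variable $v\cdot(\xi,\eta)$ has a kernel $K_\omega$ satisfying an $L^1$ bound independent of $n$; since the sector is essentially a $1\times \frac1n$ rectangle (in coordinates aligned with $d(\omega)$ and $d(\omega)^\perp$), repeated integration by parts in the two coordinate directions gives a kernel decaying like $(1+|x_1|)^{-2}(1+n^{-1}|x_2|)^{-2}$ after the appropriate anisotropic rescaling, which has uniformly bounded $L^1$ norm. Convolution against an $L^1$ kernel of this product form is dominated pointwise by a constant times $M^\ast$ applied to the (original, unscaled) function, and one bounds $M^\ast F_\omega \lesssim M^\ast F$ since Fourier restriction commutes suitably with translations — more carefully, I would keep $F$ rather than $F_\omega$ by absorbing the frequency cutoff $\theta_\omega$ into $K_\omega$ from the start, so that $G_v-G_{v'} = 2^n\|v-v'\|\,(F\ast \tilde K_\omega)$ with $\|\tilde K_\omega\|_1\lesssim 1$, hence $\lesssim 2^n\|v-v'\| M^\ast F(x,y)$.

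The main obstacle is the anisotropic kernel bound: one must verify that the $L^1$ norm of the kernel of the sector multiplier $\theta_\omega$ (together with the smooth $\hat\psi$-difference factor) is bounded \emph{uniformly in $n$}, despite the aperture shrinking like $1/n$. This is where one pays attention to orienting the integration-by-parts argument along $d(\omega)$ and $d(\omega)^\perp$: in the long direction $\theta_\omega$ varies at scale $1$, in the short (angular) direction at scale $1/n$, so the kernel lives on a $1\times n$ rectangle and its $L^1$ mass is $\sim 1$ by the product structure. The factor $\hat\psi(v\cdot(\xi,\eta))$ depends only on the combination $v\cdot(\xi,\eta)$, and since $v\in B(\omega)$ is within $O(2^{-n})$ of $d(\omega)^\perp+\tfrac\pi2$, this direction is essentially $d(\omega)$, so it does not spoil the short-direction smoothness — this compatibility is precisely why $B(\omega)$ was chosen to capture $10A(\omega)+\tfrac\pi2$. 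Everything else (the mean value theorem producing the $2^n\|v-v'\|$ gain, and the pointwise domination by $M^\ast$) is routine.
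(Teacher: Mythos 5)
Your approach is essentially the one in the paper: rotate so the sector is axis-aligned (the paper fixes $d(\omega)=(0,-1)$; you rescale by $2^n$ instead, which is cosmetic), observe that the multiplier $\theta_\omega(\xi,\eta)\bigl(\hat\psi(v\cdot(\xi,\eta))-\hat\psi(v'\cdot(\xi,\eta))\bigr)$ picks up the smallness factor $2^n\|v-v'\|$ from the mean value theorem since $|(\xi,\eta)|\lesssim 2^n$ on $\tilde\omega$, integrate by parts in the two aligned coordinates using the anisotropic derivative bounds to show the kernel is an $L^1$ bump adapted to the dual $1\times 2^n$ rectangle, and dominate convolution by $M^*$. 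One small correction: the aperture of $\omega$ is $\sim 2^{-n}$ (so that $|A(\omega)|\sim 2^{-n}$ as the paper states), not $\sim 1/n$; the paper's phrase ``$n$ sectors'' is a slip for $2^n$. Relatedly, the rescaled kernel is $\frac1n(1+|x_1|)^{-2}(1+n^{-1}|x_2|)^{-2}$ rather than $(1+|x_1|)^{-2}(1+n^{-1}|x_2|)^{-2}$ --- without the prefactor the $L^1$ norm is $\sim n$, not $\sim 1$; restoring the Jacobian/sup-norm factor gives the bounded $L^1$ norm you want. Notably your insistence on the product form $(1+|x_1|)^{-2}(1+c|x_2|)^{-2}$ is actually more careful than the paper's displayed estimate $(1+|x|+2^n|y|)^{-2}$, which by itself is not integrable and does not dominate $M^*$; the product bound (which follows from the same integration by parts, done in both variables) is what is really needed, so you have implicitly filled that gap.
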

\begin{proof}
Indeed, by rotation invariance it suffices to assume that $d(\omega)=(0,-1)$. Then, $B(\omega)$ is an interval of length roughly $2^{-n}$ containing $(1,0)$. Thus, if $v,v'\in B(\omega)$, then $|v_2|,|v_2'|\lesssim 2^{-n}$. This immediately shows that for each $\alpha,\beta\ge 0$
$$|\partial^{\alpha}_\xi\partial^\beta_\eta[\theta_{\omega}(\xi,\eta)(\hat{\psi}(v_1\xi+v_2\eta)-\hat{\psi}(v_1'\xi+v_2'\eta))]|\lesssim 2^{n(1-\beta)}\|v-v'\|.$$
By using this, integration by parts and the fact that $\theta_{\omega}$ is supported in $\tilde{\omega}$, it follows that
$$|K_{\omega}(x,y):=\int\theta_{\omega}(\xi,\eta)[\hat{\psi}(v_1\xi+v_2\eta)-\hat{\psi}(v_1'\xi+v_2'\eta)]e^{i(x\xi+y\eta)}d\xi d\eta|$$
$$\lesssim 2^{2n}\|v-v'\|\min\{1,|x|^{-2},|y2^n|^{-2}\}\lesssim2^{2n}\|v-v'\|(1+|x|+2^n|y|)^{-2}.$$
The lemma follows.
\end{proof}

Define
$$\tilde{F}_\omega(x,y,v):=\int F_\omega(x+tv_1,y+tv_2)\psi(t)dt,$$
and $\Omega=\cup_{n\ge 1}\Omega_n$.
Thus,
$$M_0(\sum_{n\ge 1}F_n)(x,y)=\sup_{v\in \Sigma_N}|\sum_{n\ge 1}\sum_{\omega\in \Omega}\tilde{F}_\omega(x,y,v)|.$$
By splitting the sum above in three parts, we can assume without loss of generality that all intervals $B(\omega)$ belong to a fixed grid.

For $1\le \kappa\le \log N$, let $\Omega^\kappa$ consist of those $\omega\in\Omega$ such that
\begin{equation}
\label{e.76}
2^{\kappa-1}<|B(\omega)\cap\Sigma_N|\le 2^\kappa.
\end{equation}
 Let
$$T_\kappa F(x,y)=\sup_{v\in \Sigma_N}|\sum_{n\ge 1}\sum_{\omega\in \Omega^{\kappa}}\tilde{F}_\omega(x,y,v)|.$$
It suffices to prove that $\|T_\kappa\|_2\lesssim 1$ for each $\kappa$, by orthogonality and the fact that
$$T_\kappa F=T_\kappa(\sum_{\omega\in\Omega^\kappa}F_\omega).$$
\begin{definition}
A cluster is a subset $\textbf{C}$ of $\Omega^\kappa$ such that $\bigcap_{\omega\in\textbf{C}} B(\omega)\not=\emptyset$. Let $t(\textbf{C})$ be the center of the smallest $B(\omega)$ in the cluster.
\end{definition}
The grid property and \eqref{e.76} imply that $\Omega^\kappa$ can be split in clusters such that the intervals $B(\omega)$ and $B(\omega')$ are pairwise disjoint if $\omega$, $\omega'$ belong to distinct clusters. Thus,
$$\sup_{v\in \Sigma_N}|\sum_{\omega\in \Omega^\kappa}\tilde{F}_\omega(x,y,v)|=\sup_{\textbf{C}}\sup_{v\in \Sigma_N}|\sum_{\omega\in \textbf{C}}\tilde{F}_\omega(x,y,v)|.$$
Note also that $\sum_{\omega\in\textbf{C}}F_\omega$ and $\sum_{\omega\in\textbf{C}'}F_\omega$ are orthogonal, for two distinct clusters $\textbf{C}$ and $\textbf{C}'$. Using these two facts, it suffices to prove that for each cluster $\textbf{C}$,
$$\|\sup_{v\in \Sigma_N}|\sum_{\omega\in \textbf{C}}\tilde{F}_\omega(x,y,v)|\|_2\lesssim\|F\|_2$$

Note that for each $v\in\Sigma_N$
$$\sum_{\omega\in\textbf{C}}\tilde{F}_\omega(x,y,v)=\sum_{\omega\in\textbf{C}\atop{v\in B(\omega)}}\tilde{F}_\omega(x,y,v)=$$
$$=\sum_{\omega\in\textbf{C}\atop{v\in B(\omega)}}\tilde{F}_\omega(x,y,t(\textbf{C}))+O(M^*F(x,y))\;\;\;\hbox{(by Lemma }\ref{L543})\;=$$$$
=P_{m}(\sum_{\omega\in\textbf{C}}\tilde{F}_\omega(x,y,t(\textbf{C})))+O(M^*F(x,y))$$
for some appropriate $m=m(v,\textbf{C})$, where $P_m$ denotes the Fourier projection on the ball with radius $2^m$.
The last equality above follows since  $\tilde{F}_\omega(x,y,v)$ is supported in frequency in $\omega$.
Recall that the maximal operator $\sup_m|P_mF|$ is bounded on $L^2$, by invoking standard maximal function arguments. Also, the operator $\sum_{\omega\in\textbf{C}}\tilde{F}_\omega(x,y,t(\textbf{C}))$ has a bounded multiplier and thus it is  bounded on $L^2$. This ends the proof.

\end{document}